\newcommand{\PreserveBackslash}[1]{\let\temp=\\#1\let\\=\temp}
\newcolumntype{C}[1]{>{\PreserveBackslash\centering}p{#1}}
\newtheorem{thm}{Theorem}
\newtheorem{prop}[thm]{Proposition}
\newtheorem*{lemma*}{Lemma}
\numberwithin{equation}{section}
\begin{document}

\begin{frontmatter}
\title{Benders decomposition for Network Design Covering Problems}

\author[ad0,ad00]{V\'ictor Bucarey}
\ead{vbucarey@vub.be, victor.bucarey@uoh.cl}
\author[ad2,ad3]{Bernard Fortz}
\ead{bernard.fortz@ulb.ac.be}
\author[ad1]{Natividad González-Blanco \corref{cor1}
}
\ead{ngonzalez2@us.es}
\author[ad2,ad3]{Martine Labb\'e}
\ead{mlabbe@ulb.ac.be}
\author[ad1]{Juan A. Mesa}
\ead{jmesa@us.es}

\address[ad0]{Data Analytics Laboratory, Vrije Universiteit Brussel, Brussels, Belgium.} 
\address[ad00]{Institute of Engineering Sciences, Universidad de O'Higgins, Chile.} 
\address[ad1]{Departamento de Matemática Aplicada II de la Universidad de Sevilla, Sevilla, Spain.}
\address[ad2]{D\'epartement d'Informatique, Universit\'e Libre de Bruxelles, Brussels, Belgium.}
\address[ad3]{Inria Lille-Nord Europe, Villeneuve d'Ascq, France.}
\cortext[cor1]{Corresponding author}

\begin{abstract}
We consider two covering variants of the network design problem. We are given a set of origin/destina\-tion pairs, called O/D pairs, and each such O/D pair is covered if there exists a path in the network from the origin to the destination whose length is not larger than a given threshold. In the first problem, called the Maximal Covering Network Design problem, one must determine a network that maximizes the total fulfilled demand of the covered O/D pairs subject to a budget constraint on the design costs of the network. In the second problem, called the Partial Covering Network Design problem, the design cost is minimized while a lower bound is set on the total demand covered.\\
After presenting formulations, we develop a Benders decomposition approach to solve the problems. Further, we consider several stabilization methods to determine Benders cuts as well as the addition of cut-set inequalities to the master problem. We also consider the impact of adding an initial solution to our methods. Computational experiments show the efficiency of these different aspects.
\end{abstract}

\begin{keyword}
Facility planning and design \sep
Benders decomposition \sep
Network design \sep
Rapid transit network \end
{keyword}

\end{frontmatter}

\section{Introduction}
Network design is a broad and spread subject whose models often depend on the field in which they are applied. A classification of the basic problems of network design was done by \cite{MagnantiWong1984} where some classical graph problems as the minimal spanning tree, Steiner tree, shortest path, facility location, and traveling salesman problems are included as particular cases of a general mathematical programming model. Since the construction of a network often costs large amount of money and time, decisions on network design are a  crucial step when planning  networks. Thus, network design is applied in a wide range of fields: transportation, telecommunication, energy, supply chain, geostatistics, evacuation, monitoring, etc.
Infrastructure network design constitutes a major step in the planning of a transportation network since the performance, the efficiency, the robustness and other features strongly depend on the selected nodes and the way of connecting them, see \cite{guihaire2008transit}. For instance, the main purpose of a rapid transit network is to improve the mobility of the inhabitants of a city or a metropolitan area. This improvement could lead to lower journey times, less pollution and/or less energy consumption which drives the communities to a more sustainable mobility.

Since it is generally too expensive to connect all the potential nodes, one must determine a subnetwork that serves at best the traffic demand. Depending on the application, different optimality measures can be considered. In particular, in the field of transportation, and especially in the area of passenger transportation, the aim is to get the infrastructure close to potential customers. In this framework, \cite{schmidt2014location} propose to minimize the maximum routing cost for an origin-destination pair when using the new network. Alternatively, the traffic between an origin and a destination may be considered as captured if the cost or travel time when using the network is not larger than the cost or travel time of the best alternative solution (not using the new network). In this case, \cite{perea2020transportation} and \cite{garcia2013grasp} propose to select a sub(network) from an underlying network with the aim of capturing or covering as much traffic as possible for a reasonable construction cost. This paper is devoted to this problem, called the \textbf{M}aximum \textbf{C}overing Network Design Problem $(MC)$ as well as to the closely related problem called, \textbf{P}artial \textbf{C}overing Network Design Problem $(PC)$. The latter aims to minimize the network design cost for constructing the network under the constraint that a minimum percentage of the total traffic demand is covered.

Covering problems in graphs have attracted the attention of researchers since the middle of the last century. As far as the
authors are aware the first papers on the vertex-covering problem were due to \cite{berge1957two} and \cite{norman1959algorithm} in the late 50s. This problem is related to the set-covering problem in which a family of sets is given and the minimal subfamily whose union contains all the elements is sought for. In \cite{hakimi1965optimum} the vertex-covering problem was formulated as an integer linear programming model and solved by
using Boolean functions. \cite{toregas1971location} applied the vertex-covering problem to the location of emergency services. They assumed that a vertex is covered if it is within a given coverage distance. \cite{Church1974} introduced the maximal covering location problem by fixing the number of facilities to be located. Each vertex has an associated population and the objective is to cover the maximum population within a fixed distance threshold. Since then many variants and extensions of the vertex-covering and maximal covering  problems have been studied (see \cite{GarciaMarin2020}.)

When designing an infrastructure network, the demand is given by pairs of origin-destination points, called O/D pairs, and each such pair has an associated weight representing the traffic between the origin and the destination. Usually, this demand is encoded using an origin-destination matrix. When planning a new network, often there exists a network already functioning and offering its service to the same set of origin-destination pairs. For example, a new rapid transit system may be planned in order to improve the mobility of a big city or metropolitan area, in which there already exists another transit system, in addition to the private transportation system. This current transit system could be more dense than the planned one but slower since it uses the same right-of-way as the private traffic system. Thus, in some way both systems compete with each other and both compete with the private system of transportation. A similar effect occurs with mobile telecommunication operators. Therefore, the traffic between an origin and a destination is distributed among the several systems that provide the service.\\
There are mainly two ways of allocating the share of each system. The first one is the binary all-or-nothing way where the demand is only covered by one of the proposed modes. Typically, the demand is covered if the demand points are served within a range of quality service, as in \cite{church1974maximal}. The second one is based on some continuous function, using, for example, a multi-logit probability distribution, as in \cite{cascetta2009transportation}. In this case, the demand is shared between the different systems. Both allocation schemes are based on the comparison of distances, times, costs, generalized costs or utilities. In this paper, we consider a binary one, where each O/D pair is covered only if the time spent to travel from its origin to its destination in the network is below a threshold. This threshold represents the comparison between the time spent in the proposed network and a private mode, assigning the full share to the most beneficial one.

Since most network design problems are NP-hard (see e.g. \cite{perea2020transportation}), recent research efforts have been oriented to apply metaheuristic algorithms to obtain good solutions in a reasonable computational time. Thus, in the field of transportation network design, Genetic Algorithms \citep{krol2019design}, Greedy Randomized Adaptive Search Procedures \citep{garcia2013grasp}, Adaptive Large Neighborhood Search Procedures \citep{canca2017adaptive} and Matheuristics \citep{canca2019integrated} have been used to solve rapid transit network design problems and applied to medium-sized instances.

In this paper, after presenting models for problems $(MC)$ and $(PC)$, we propose exact methods based on Benders decomposition 
\citep{benders1962partitioning}. This type of decomposition has been applied to many problems in different fields, see \cite{rahmaniani2017benders} for a recent literature review on the use of Benders decomposition in combinatorial optimization. One recent contribution applied to set covering and maximal covering location problems appear in \cite{Cordeau2019benders}. The authors propose different types of normalized Benders cuts for these two covering problems.

Benders decomposition for network design problems has been studied since the 80s. In \cite{magnanti1986tailoring}, the authors minimize the total construction cost of an uncapacitated network subject to the constraint that all O/D pairs must be covered. Given the structure of the problem, the Benders reformulation is stated with one subproblem for each O/D pair. A Benders decomposition for a multi-layer network design problem is presented in \cite{FORTZ2009359}. Benders decomposition was also applied in \cite{botton2013benders} in the context of designing survivable networks. In \cite{Costa2009benders}, a multi-commodity capacitated network design problem is studied and the strength of different Benders cuts is analysed. In \cite{marin2009urban} a multi-objective approach is solved through Benders decomposition. The coverage is maximized and the total cost design is minimized. To the best of our knowledge, we apply for the first time a branch-and-Benders-cut approach to network design coverage problems. We also give a detailed study of some normalization techniques for Benders cuts in this context, including {\it facet-defining cuts} \citep{Conforti2019facet}. These cuts are a stronger version of the cuts proposed by \cite{ben2007acceleration}.

This paper presents several contributions. First, we present new mathematical integer formulations for the network design problems $(MC)$ and $(PC)$. The formulation for $(MC)$ is stronger than a previously proposed one, see e.g. \cite{marin2009urban} and \cite{garcia2013grasp} (although the proposed formulation was not the main purpose of the latter), while $(PC)$ was never studied to the best of our knowledge. Our second contribution consists of polyhedral properties that are useful from the algorithmic point of view. A third contribution is the study of exact algorithms for the network design based on different Benders implementations. We propose a normalization technique and we consider the facet-defining cuts. Our computational experiments show that our Benders implementations are competitive with exact and non-exact methods existing in the literature and even comparing with the exact method of Benders decomposition existing in \texttt{CPLEX}.

The structure of the paper is as follows. In Section \ref{s:models}, we present mixed integer linear formulations for $(MC)$ and $(PC)$. We also study some polyhedral properties of the formulations and propose a simple algorithm to find an initial feasible solution for both problems. In Section \ref{sec:BImp}, we study different Benders implementations and some algorithmic enhancements. Also, we discuss some improvements based on cut-set inequalities. A computational study is detailed in Section \ref{s:computational}. Finally, our conclusions are presented in Section \ref{s:conclusion}.

\section{Problem formulations and some properties}\label{s:models}
In this section we present mixed integer linear formulations for the Maximal Covering Network Design Problem $(MC)$ and the Partial Set Covering Network Design Problem $(PC)$. We also describe some pre-processing methods and finish with some polyhedral properties. We first introduce some notation.

We consider an undirected graph denoted by $\mathcal{N} = (N,E)$, where $N$ and $E$ are the sets of potential nodes and edges that can be constructed. Each element $e \in E$ is denoted by $\{i,j\}$, with $i,j \in N$. We use the notation $i\in e$ if node $i$ is a terminal node of $e$. Let $\delta(i)$ be the set of edges incident to node $i$.

The mobility patterns are represented by a set $W\subset N\times N$ of O/D pairs. Each $w = (w^s,w^t)\in W$ is defined by an origin node $w^s \in N$, a destination node $w^t\in N$, an associated demand $g^w > 0$ and a utility $u^w > 0$. This utility translates the fact that there already exists a different network competing with the network to be constructed in an all-or-nothing way. In other words, an O/D pair $(w^s,w^t)$ will travel on the newly constructed network if it contains a path between $w^s$ and $w^t$ of length shorter than or equal to the utility $u^w$. We then say that the O/D pair is covered. In terms of the transportation area, the existing network represents a private transportation mode, the planned one represents a public transportation mode and the parameter $u^w,\,w\in W$, refers to the utility of taking the private mode.

Costs for building nodes, $i\in N$, and edges, $e \in E$, are denoted by $b_i$ and $c_e$, respectively. The total construction cost cannot exceed the budget $C_{max}$. For example, in the context of constructing a transit network, each node cost may represent the total cost of building one station in a specific location in the network. On the other hand, each edge cost represents the total cost of linking two stations. 

For each $e=\{i,j\}\in E$, we define two arcs: $a=(i,j)$ and $\hat{a} =(j,i)$. The resulting set of arcs is denoted by $A$. The length of arc $a \in A$ is denoted by $d_a$. For each O/D pair $w \in W$ we define a subgraph $\mathcal{N}^w = (N^w, E^w)$ containing all feasible nodes and edges for $w$, i.e. that belong to a path in $\mathcal{N}$ whose total length is lower than or equal to $u^w$. We also denote $A^w$ as the set of feasible arcs. In Section \ref{s:pre}, we describe how to construct these subgraphs. We use notation $\delta_w^+(i)$ ($\delta_w^-(i)$ respectively) to denote the set of arcs going out (in respectively) of node $i \in N^w$. In particular, $\delta_w^-(w^s) = \emptyset$ and $\delta_w^+(w^t) = \emptyset$. We also denote by $\delta_w(i)$ the set of edges incident to node $i$ in graph $\mathcal N^w$.

\subsection{Mixed Integer Linear Formulations}\label{s:formulations}

We first present a formulation of the \textit{Maximal Covering Network Design Problem} $(MC)$, whose aim is to design an infrastructure network maximizing the total demand covered subject to a budget constraint: 

\begin{align}
(MC) \quad \max_{\boldsymbol{x},\boldsymbol{y},\boldsymbol{z},\boldsymbol{f}} &\quad \sum_{w \in W} g^w z^w &\label{eq:MCobj}\\ 
\mbox{s.t. } &\quad \sum_{e \in E} c_e x_e + \sum_{i \in N} b_i y_i \le C_{max}, \label{eq:budget} \\
&\quad x_e \le y_i,\hspace{8cm} e \in E, i \in e, \label{rel_xy}\\
&\quad \sum_{a \in \delta_w^+(i)} f^w_a -\sum_{a \in \delta_w^-(i)} f^w_a = 
 \begin{cases}
z^w, &\text{if $i = w^s$},\\
-z^w, &\text{if $i = w^t$}, \\
0, & \text{otherwise}, 
\end{cases} \hspace{1.8cm} w \in W, i \in N^w, \label{flow_cons}\\
&\quad f^w_{a}+f^w_{\hat{a}} \le x_{e}, \hspace{2cm} w \in W,\, e=\{i,j\} \in E^w: a=(i,j), \, \hat{a}=(j,i),\label{Capacity} \\
&\quad \sum_{a \in A^w} d_a f^w_a \le u^w z^w, \hspace{7cm} w \in W, \label{Utility} \\
&\quad y_i,\, x_e,\, z^w \in \{0,1\}, \hspace{5cm} i\in N,\, e\in E,\, w\in W,\label{eq:nat}\\ 
&\quad f_a^w \in \{0,1\}, \hspace{7cm} a \in A^w, w\in W,\label{eq:nat-f}
\end{align}
\noindent where $y_i$ and $x_e$ represent the binary design decisions of building node $i$ and edge $e$, respectively. Mode choice variable $z^w$ takes value $1$ if the O/D pair $w$ is covered and $0$ otherwise. Variables $f_a^w$ are used to model a path between $w^s$ and $w^t$, if possible. Variable $f_a^w$ takes value $1$ if arc $a$ belongs to the path from $w^s$ to $w^t$, and $0$ otherwise. Each variable $f^w_a$, such that $a\notin A^w$, is set to zero.

The objective function (\ref{eq:MCobj}) to be maximized represents the demand covered. Constraint (\ref{eq:budget}) limits the total construction cost. Constraint (\ref{rel_xy}) ensures that if an edge is constructed, then its terminal nodes are constructed as well. For each pair $w$, expressions (\ref{flow_cons}), (\ref{Capacity}) and (\ref{Utility}) guarantee demand conservation and link flow variables $f_a^w$ with decision variables $z^w$ and design variables $x_e$. Constraints (\ref{Capacity}) are named capacity constraints and they force each edge to be used only in one direction at most. Constraints (\ref{Utility}) referenced as mode choice constraints, put an upper bound on the length of the path for each pair $w$. This ensures variable $z^w$ to take value $1$ only if there exists a path between $w^s$ and $w^t$ with length at most $u^w$. This path is represented by variables $f_a^w$. %From the bilevel optimization point of view, this is an optimistic assumption because in case of ties the user will choose the solution with maximal coverage. However, the pessimistic assumption can be modeled by slightly increasing $u$.
Remark that several paths with length not larger than $u^w$ may exists for a given design solution $x,y$. Then, the values of the  flow variables $f_a^w$ will describe one of them and the path choice has no influence on the objective function value \eqref{eq:MCobj}. 
Finally, constraints (\ref{eq:nat}) and (\ref{eq:nat-f}) state that variables are binary.

The \textit{Partial Covering Network Design Problem} $(PC)$, which minimizes the total construction cost of the network subject to a minimum coverage level of the total demand, can be formulated as follows:
\begin{align}
(PC) \quad \min_{\boldsymbol{x},\boldsymbol{y},\boldsymbol{z},\boldsymbol{f}} &\quad \sum_{i \in N} b_i y_i + \sum_{e \in E} c_e x_e &\label{eq:PCobj}\\ 
\mbox{s.t. }& \quad \sum_{w \in W} g^w z^w \ge \beta \, G, \label{eq:PCconst} \\
 & \quad \mbox{Constraints (\ref{rel_xy}), (\ref{flow_cons}), (\ref{Capacity}), (\ref{Utility}), (\ref{eq:nat}), (\ref{eq:nat-f}), }& \nonumber
\end{align}
\noindent where $\beta\in (0,1]$ and $G=\sum\limits_{w\in W}g^w$. Here, the objective function (\ref{eq:PCobj}) to be minimized represents the design cost. Constraint (\ref{eq:PCconst}) imposes that a proportion $\beta$ of the total demand is covered.

In the previous works by \cite{marin2009urban} and \cite{garcia2013grasp}, constraints \eqref{Capacity} and \eqref{Utility} are formulated in a different way. For example, in \cite{garcia2013grasp}, these constraints were written as 
\begin{align}
 & f_a^w + z^w - 1 \le x_a,& \kern-2cm w \in W,\, e=\{i,j\} \in E^w: a=(i,j), \label{eq:cap_ardila} \\
 & \sum_{a\in A^w} d_a f_a^w + M(z^w -1)\le u^w z^w, & w \in W,\label{eq:uti_ardila}
\end{align}
\noindent where the design variable $x_a$ is defined for each arc. Given that $z^w-1 \le 0$, expressions \eqref{Capacity} and \eqref{Utility} are stronger than \eqref{eq:cap_ardila} and \eqref{eq:uti_ardila}, respectively. 

In addition, constraint \eqref{eq:uti_ardila} involves a \textit{``big-M''} constant. Our proposed formulation does not need it, which avoids the numerical instability generated by this constant. As we will see in Section 4.2, we observed that our proposed formulation is not only stronger than the one proposed in \cite{garcia2013grasp}, but it is also computationally more efficient. In consequence, we only focus our analysis on our proposed formulation.

Another observation is that constraints \eqref{Capacity} are a reinforcement of the usual capacity constraints $f_a^w \leq x_e$ and $f_{\hat{a}}^w \leq x_e$. In most applications where flow or design variables appear in the objective functions, the disaggregated version is sufficient to obtain a valid model as subtours are naturally non-optimal. However, it is not the case in our model, and there exist optimal solutions with subtours if the disaggregated version of \eqref{Capacity} is used. Such a strengthening was already introduced in the context of uncapacitated network design, see e.g. \cite{balakrishnan1989dual}, and Steiner trees, see e.g. \cite{sinnl2016node} and \cite{fortz2020steiner}.

\subsection{Pre-processing methods}\label{s:pre}

In this section we describe some methods to reduce the size of the instances before solving them. First, we describe how to build each subgraph $\mathcal{N}^w=(N^w,E^w)$. Then for each problem, $(MC)$ and $(PC)$, we sketch a method to eliminate O/D pairs which will never be covered.

To create $\mathcal N^w$ we only consider useful nodes and edges from $\mathcal N$. For each O/D pair $w$, we eliminate all the nodes $i\in N$ that do not belong to any path from $w^s$ to $w^t$ shorter than $u^w$. Then, we define $E^w$ as the set of edges in $E$ incident only to the non eliminated nodes. Finally, the set $A^w$ is obtained by duplicating all edges in $E^w$ with the exception of arcs $(i, w^s)$ and $(w^t,i)$. We describe this procedure in Algorithm \ref{pre_1}. 

We assume that the cost of constructing each node and each edge is not higher than the budget.

\begin{algorithm}[htpb] 
	\caption{Pre-processing I}
	\label{pre_1} 
	\begin{algorithmic} 
		\FOR{$w \in W$}
		\STATE $N^w = N$
		\FOR{$i \in N$}
		\STATE compute the shortest path for the O/D pairs $(w^s,i)$ and $(i,w^t)$
		\IF{the sum of the length of both paths is greater than $u^w$ }
		\STATE $N^w = N^w \setminus \{i \}$
		\STATE $E^w = E^w \setminus \delta(i)$
		\ENDIF
		\ENDFOR
		\STATE $A^w = \{(i,j) \in A :\{i, j\} \in E^w, \, j \neq w^s,\, i\neq w^t\}$
		\ENDFOR
 \RETURN $\{ \mathcal{N}^w = (N^w, E^w), A^w \}_{w\in W}$
\end{algorithmic}
\end{algorithm}

Next, we focus on $(MC)$. We can eliminate O/D pairs $w$ that are too expensive to be covered. That means, the O/D pair $w$ is deleted from $W$ if there is no path between $w^s$ and $w^t$ satisfying: i. its building cost is less than $C_{max}$; and ii. its length is less than $u^w$.\\
This can be checked by solving a shortest path problem with resource constraints and can thus be done in a pseudo-polynomial time. \cite{Desrochers1986algorithm} shows how to adapt Bellman-Ford algorithm to solve it. However, given the moderate size of graphs we consider, we solve it as a feasibility problem. For each $w$, we consider the feasibility problem associated to constraints (\ref{eq:budget}) (\ref{rel_xy}), (\ref{flow_cons}), (\ref{Capacity}), (\ref{Utility}) and (\ref{eq:nat}), with $z^w$ fixed to $1$. If this problem is infeasible, then the O/D pair $w$ is deleted from $W$. Otherwise, there exists a feasible path denoted by Path$_w$. We denote by $(\widetilde{N}^w, \widetilde{E}^w)$ the subgraph of $\mathcal{N}^w$ induced by Path$_w$.

\subsection{Polyhedral properties}

Both formulations $(MC)$ and $(PC)$ involve flow variables $f^w_a$ whose number can be huge when the number of O/D pairs is large. To circumvent this drawback we use a Benders decomposition approach for solving $(MC)$ and $(PC)$. In this subsection, we present properties of the two formulations that allow us to apply such a decomposition in an efficient way.
The first proposition shows that we can relax the integrality constraints on the flow variables $f^w_a$. Let $(MC\_R)$ and $(PC\_R)$ denote the formulations $(MC)$ and $(PC)$ in which constraints (\ref{eq:nat-f}) are replaced by non-negativity constraints, i.e.
\begin{equation}
f^w_a \geq 0, w \in W, a \in A. \label{eq:nneg-f}
\end{equation}
We denote the set of feasible points to a formulation $F$ by $\mathcal{F}(F)$. Further, let $Q$ be a set of points $(\boldsymbol{x},\boldsymbol{z}) \in R^q \times R^p$. 
Then the projection of $Q$ onto the 
$x$-space, denoted $Proj_xQ$, is the set of points given by $Proj_{\boldsymbol{x}} Q = \{\boldsymbol{x} \in R^q: (\boldsymbol{x},\boldsymbol{z}) \in Q$ for some $\boldsymbol{z} \in R^p\}$.

\begin{prop} \label{prop_f}
$Proj_{\boldsymbol{x},\boldsymbol{y},\boldsymbol{z}}(\mathcal{F}(MC)) = Proj_{\boldsymbol{x},\boldsymbol{y},\boldsymbol{z}}(\mathcal{F}(MC\_R))$ and $Proj_{\boldsymbol{x},\boldsymbol{y},\boldsymbol{z}}(\mathcal{F}(PC)) = Proj_{\boldsymbol{x},\boldsymbol{y},\boldsymbol{z}}(\mathcal{F}(PC\_R))$.
\end{prop}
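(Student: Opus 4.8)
The inclusion $Proj_{\boldsymbol{x},\boldsymbol{y},\boldsymbol{z}}(\mathcal{F}(MC)) \subseteq Proj_{\boldsymbol{x},\boldsymbol{y},\boldsymbol{z}}(\mathcal{F}(MC\_R))$ is immediate, since any feasible point of $(MC)$ is feasible for $(MC\_R)$ (the only change is relaxing $f^w_a \in \{0,1\}$ to $f^w_a \ge 0$). The same holds for $(PC)$. So the whole content is the reverse inclusion: given $(\boldsymbol{x},\boldsymbol{y},\boldsymbol{z})$ together with fractional flows $\boldsymbol{f}$ satisfying the relaxed constraints, I must produce \emph{integral} flows $\bar{\boldsymbol{f}}$ that are feasible for the same $(\boldsymbol{x},\boldsymbol{y},\boldsymbol{z})$. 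Note that the design/coverage variables $\boldsymbol{x},\boldsymbol{y},\boldsymbol{z}$ are untouched, so constraints \eqref{eq:budget}, \eqref{rel_xy}, \eqref{eq:PCconst}, and the integrality \eqref{eq:nat} are automatically preserved; only \eqref{flow_cons}, \eqref{Capacity}, \eqref{Utility} and \eqref{eq:nat-f} concern $\boldsymbol{f}$, and they decompose completely over $w \in W$. Hence it suffices to fix a single $w$ and argue about that commodity's flow.

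For a fixed $w$: if $z^w = 0$, then \eqref{flow_cons} forces $\boldsymbol{f}^w$ to be a circulation and \eqref{Utility} forces $\sum_a d_a f^w_a \le 0$; since $d_a > 0$ and $f^w_a \ge 0$ this gives $f^w_a = 0$ for all $a$, which is already integral, so nothing to do. If $z^w = 1$, then $\boldsymbol{f}^w$ is a (fractional) unit $w^s$--$w^t$ flow in $\mathcal{N}^w$ with $\sum_{a} d_a f^w_a \le u^w$. The plan is to take a flow decomposition of $\boldsymbol{f}^w$ into $w^s$--$w^t$ paths (and cycles, which we discard): write $\boldsymbol{f}^w = \sum_k \lambda_k \chi_{P_k} + (\text{cycles})$ with $\lambda_k > 0$, $\sum_k \lambda_k = 1$, $P_k$ simple paths from $w^s$ to $w^t$. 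Then $\sum_k \lambda_k \bigl(\sum_{a \in P_k} d_a\bigr) \le \sum_a d_a f^w_a \le u^w$, so by averaging at least one path $P_{k^*}$ has length $\sum_{a \in P_{k^*}} d_a \le u^w$. Set $\bar f^w_a = 1$ if $a \in P_{k^*}$ and $0$ otherwise. This $\bar{\boldsymbol{f}}^w$ satisfies \eqref{flow_cons} with $z^w = 1$ (it is a unit path flow), satisfies \eqref{Utility} by the length bound, and is integral. The only remaining check is the capacity constraint \eqref{Capacity}: since $P_{k^*}$ is a \emph{simple} path it uses at most one of the two arcs $a,\hat a$ of any edge $e$, so $\bar f^w_a + \bar f^w_{\hat a} \le 1$; and we must verify $\le x_e$, i.e. that the edges of $P_{k^*}$ have $x_e = 1$. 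This follows because the original fractional flow satisfied $f^w_a + f^w_{\hat a} \le x_e$, and any arc $a$ with $\bar f^w_a = 1$ lies on a path in the decomposition, hence carries positive original flow $f^w_a > 0$, forcing $x_e > 0$, hence $x_e = 1$ by integrality of $\boldsymbol{x}$.

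The argument is identical for $(PC)$ since \eqref{eq:PCconst} involves only $\boldsymbol{z}$ and the constraints on $\boldsymbol{f}$ are the same. The main (and only real) obstacle is making the flow-decomposition step clean: one should state explicitly that a nonnegative flow vector satisfying conservation \eqref{flow_cons} with supply/demand $z^w$ at $w^s$/$w^t$ decomposes into at most $|A^w|$ path- and cycle-flows, that cycles may be removed without violating any constraint (removal only decreases $f^w_a$, preserving \eqref{Capacity} and \eqref{Utility} and keeping conservation, using $\delta_w^-(w^s)=\emptyset$, $\delta_w^+(w^t)=\emptyset$ so no cycle passes through the terminals in a way that matters), and then apply the averaging argument. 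Everything else is bookkeeping: the preservation of the budget, $x$--$y$ coupling, the $\beta G$ coverage bound, and all integrality on $\boldsymbol{x},\boldsymbol{y},\boldsymbol{z}$ is automatic because those variables are carried over unchanged.
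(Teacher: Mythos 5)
Your proposal is correct and follows essentially the same route as the paper: the trivial forward inclusion, then for each $w$ with $z^w=1$ a path--cycle decomposition of the fractional flow, an averaging argument using \eqref{Utility} to extract a single path of length at most $u^w$, and replacement of $\boldsymbol{f}^w$ by the indicator of that path. Your version is in fact slightly more careful than the paper's, since you explicitly verify that \eqref{Capacity} is preserved (via integrality of $\boldsymbol{x}$ and positivity of the flow on the selected path) and justify the $z^w=0$ case, both of which the paper leaves implicit.
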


\begin{proof} 

We provide the proof for $(MC)$, the other one being identical. 

First, $\mathcal{F}(MC) \subseteq \mathcal{F}(MC\_R)$ implies $Proj_{\boldsymbol{x},\boldsymbol{y},\boldsymbol{z}}(\mathcal{F}(MC)) \subseteq Proj_{\boldsymbol{x},\boldsymbol{y},\boldsymbol{z}}(\mathcal{F}(MC\_R))$. Second, let $(\boldsymbol{x},\boldsymbol{y},\boldsymbol{z})$ be a point belonging to $Proj_{\boldsymbol{x},\boldsymbol{y},\boldsymbol{z}}(\mathcal{F}(MC\_R))$. For every O/D pair $w \in W$ such that $z^w = 0$ then $\boldsymbol{f}^w=0$. In the case where $z^w=1$, there exists a flow $f^w_a \geq 0$ satisfying (\ref{flow_cons}) and (\ref{Capacity}) that can be decomposed into a convex combination of flows on paths from $w^s$ to $w^t$ and cycles. Given that the flow $f_a^w$ also satisfies (\ref{Utility}), then a flow of value 1 on one of the paths in the convex combination must satisfy this constraint. Hence by taking $f_a^w$ equal to 1 for the arcs belonging to this path and to 0 otherwise, we show that $(\boldsymbol{x},\boldsymbol{y},\boldsymbol{z})$ also belongs to $Proj_{\boldsymbol{x},\boldsymbol{y},\boldsymbol{z}}(\mathcal{F}(MC))$. 
\end{proof}

Note that a similar result is presented in the recent article \cite{ljubic2020benders}. Based on Proposition \ref{prop_f}, we propose a \textit{Benders decomposition} where variables $f^w_a$ are projected out from the model and replaced by {\it Benders feasibility cuts}. As we will see in Section \ref{s:facertdef}, we also consider the \textit{Benders facet-defining cuts} proposed in \cite{Conforti2019facet}. To apply this technique it is necessary to get an interior point of the convex hull of $Proj_{\boldsymbol{x},\boldsymbol{y},\boldsymbol{z}}(\mathcal{F}(MC\_R))$ (resp. $Proj_{\boldsymbol{x},\boldsymbol{y},\boldsymbol{z}}(\mathcal{F}(PC\_R))$). The following property gives us an algorithmic tool to apply this technique to $(MC)$.

\begin{prop} \label{prop_MC}
After pre-processing, the convex hull of $Proj_{\boldsymbol{x},\boldsymbol{y},\boldsymbol{z}}(\mathcal{F}(MC\_R))$ is full-dimensional. 
\end{prop}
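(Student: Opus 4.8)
The ambient space of $Proj_{\boldsymbol{x},\boldsymbol{y},\boldsymbol{z}}(\mathcal{F}(MC\_R))$ has dimension $|N|+|E|+|W|$, so the plan is to exhibit $|N|+|E|+|W|+1$ affinely independent points in it, each one the $(\boldsymbol{x},\boldsymbol{y},\boldsymbol{z})$-projection of a feasible point of $(MC\_R)$. I would take: the origin $P_0=\boldsymbol{0}$; for each node $i\in N$, the point $P_i$ whose only nonzero coordinate is $y_i=1$; for each edge $e=\{i,j\}\in E$, the point $P_e$ whose only nonzero coordinates are $x_e=y_i=y_j=1$; and for each O/D pair $w\in W$, a feasible point $P_w$ of $(MC\_R)$ with $z^w=1$ and $z^{w'}=0$ for all $w'\neq w$. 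For $P_0$, $P_i$ and $P_e$ the point is completed by setting the flows $\boldsymbol{f}=\boldsymbol{0}$; the existence of $P_w$ with the claimed $\boldsymbol{z}$-pattern is exactly what Pre-processing~II (Section~\ref{s:pre}) buys us, since it deletes precisely those O/D pairs for which $(MC\_R)$ admits no feasible solution with $z^w=1$.

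The feasibility of $P_0$ is trivial, and that of $P_i$ reduces to the budget constraint \eqref{eq:budget}, i.e.\ to $b_i\le C_{max}$, which is part of the standing assumption (all other constraints being slack because $\boldsymbol{f}=\boldsymbol{0}$). The only point that requires an actual argument --- and hence the main obstacle --- is $P_e$: since it builds edge $e$ together with its two endpoints and nothing else, its sole binding constraint is \eqref{eq:budget} in the form $b_i+b_j+c_e\le C_{max}$. This is exactly the assumption that a single edge can be constructed within the budget (constructing an edge necessarily entails constructing its endpoints, by \eqref{rel_xy}); equivalently, after pre-processing every surviving edge lies on some feasible path, which already contains $i$, $j$ and $e$ and has total cost at most $C_{max}$.

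To conclude I would establish affine independence by translating the other $|N|+|E|+|W|$ points by $-P_0$ and checking that the vectors $(P_i-P_0)_{i\in N}$, $(P_e-P_0)_{e\in E}$, $(P_w-P_0)_{w\in W}$ are linearly independent. With the coordinates ordered as $(\boldsymbol{y},\boldsymbol{x},\boldsymbol{z})$ and the vectors listed in these three blocks, the associated square matrix is block lower triangular with identity diagonal blocks: the $\boldsymbol{y}$-part of the first block is $I_{|N|}$; the $\boldsymbol{x}$-part of the second block is $I_{|E|}$, because $P_e$ activates only the single $x$-variable $x_e$; the $\boldsymbol{z}$-part of the third block is $I_{|W|}$, because $P_w$ activates only the single $z$-variable $z^w$; and every block strictly above the diagonal vanishes. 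Its determinant equals $1$, so the $|N|+|E|+|W|+1$ points are affinely independent and $\mathrm{conv}(Proj_{\boldsymbol{x},\boldsymbol{y},\boldsymbol{z}}(\mathcal{F}(MC\_R)))$ is full-dimensional.
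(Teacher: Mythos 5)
Your proposal is correct and follows essentially the same approach as the paper: the same $|N|+|E|+|W|+1$ points (the origin, one point per node, one point per edge with its two endpoints, and one path-induced point per surviving O/D pair), with the affine independence argument made explicit via a block-triangular matrix. The only difference is that you spell out the budget feasibility of the single-edge points ($b_i+b_j+c_e\le C_{max}$) more carefully than the paper, which only states the weaker assumption that each individual node and edge fits within the budget.
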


\begin{proof}
 To prove the result, we exhibit $\vert N\vert + \vert E\vert + \vert W\vert + 1$ affinely independent feasible points:
	\begin{itemize}
		\item The $0$ vector is feasible.
		\item For each $i\in N$, the points:
		$$ y_{i}=1, \, y_{i'} = 0, \, i' \in N\setminus\{i\},\quad x_e = 0,\, e\in E,\quad z^w =0, \, w\in W.$$
		\item For each $e=\{i,j\}\in E$, the points:
		$$y_k=1, \, k\in e,\, y_k = 0,\, k \in N \setminus \{i,j\},\quad x_e=1,\, x_{e'} = 0,\, e'\in E\setminus\{e\},\quad z^w =0, \, w\in W.$$
		\item For each $w\in W$, the points:
		\begin{gather*}
		y_i=1,\, i \in \widetilde{N}^w,\, y_i=0,\, i \in N\setminus \widetilde{N}^w, \quad	x_e=1, \, e\in \widetilde{E}^w,\, x_e = 0,\, e\in E\setminus\widetilde{E}^w,\\
		z^w=1,\, z^{w'} = 0, \, w'\in W\setminus\{w\}.
		\end{gather*}
	\end{itemize}
	Clearly these points are feasible and affinely independent. Thus the polytope is full-dimensional.
\end{proof}

The proof of Proposition \ref{prop_MC} gives us a way to compute an interior point of the convex hull of $Proj_{\boldsymbol{x},\boldsymbol{y},\boldsymbol{z}}(\mathcal{F}(MC\_R))$. The average of these $|N| + |E| + |W| + 1$ points is indeed such an interior point.

This is not the case for $(PC)$ as we show in Example $1$.

\paragraph{Example 1} Consider the instance of $(PC)$ given by the data presented in Table \ref{t:CP} and Figure \ref{fig:exPC}. We consider the case where at least half of the population must be covered, that is $\beta = 0.5$. In order to satisfy the trip coverage constraint \eqref{eq:PCconst}, the O/D pair $w=(1,4)$ must be covered. Hence $z^{(1,4)} = 1$ is an implicit equality. Furthermore, the only path with a length less than or equal to $u^{(1,4)} = 15$ is composed of edges \{1,2\} and \{2,4\}. Hence, $x_{\{1,2\}}$, $x_{\{2,4\}}$, $y_1$, $y_2$ and $y_4$ must take value $1$. In consequence, the polytope associated to $(PC)$ is not full-dimensional.

\begin{minipage}{0.45\textwidth}
\begin{table}[H]
\centering
\begin{tabular}{cccc}
\hline
 Origin & Destination & $u^w$ & $g^w$ \\
\hline
1 & 4 & 15 & 200 \\
2 & 4 & 10 & 50 \\
3 & 4 & 15 & 50 \\
\hline
\end{tabular}
\caption{Data in Example 1. We consider $\beta = 0.5$.}
\label{t:CP}
\end{table}
\end{minipage}
\begin{minipage}{0.45\textwidth}
\begin{figure}[H]
\centering
\includegraphics[width = 0.8\textwidth]{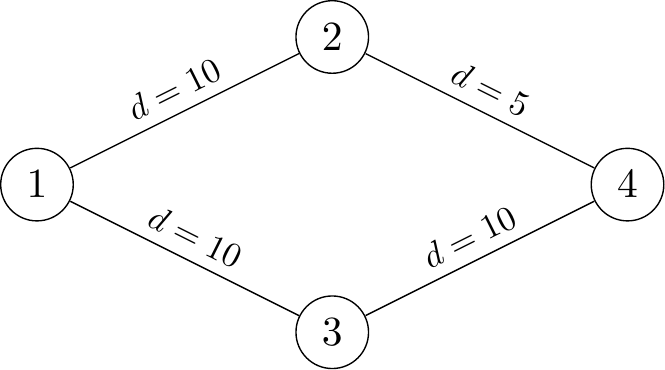}
\caption{Graph of Example 1.}
\label{fig:exPC}
\end{figure}
\end{minipage}
\\

We can compute the dimension of the convex hull of $Proj_{\boldsymbol{x},\boldsymbol{y},\boldsymbol{z}}(\mathcal{F}(PC\_R))$ in an algorithmic fashion. 

We find feasible affinely independent points and, at the same time, we detect O/D pairs which must be covered in any feasible solution. Due to the latter, there are a subset of nodes and a subset of edges that have to be built in any feasible solution. This means that there is a subset of design variables $y_i, i\in N$, $x_e, e\in E$ and mode choice variables $z^w, w\in W$ that must take value $1$. At the opposite to $(MC)$, a solution to $(PC)$ with all variables set to $0$ is not feasible. However, the solution obtained by serving all O/D pairs and building all nodes and edges is feasible. Therefore, we start with a solution with all variables in $\boldsymbol{x},\boldsymbol{y},\boldsymbol{z}$ set to $1$ and we check, one by one, if it is feasible to set them to $0$. By setting one variable $x_e$ or $y_i$ to $0$, it may become impossible to cover some O/D pair $w$. In this case, we say that edge $e$ and node $i$ is \textit{essential} for $w$. To simplify the notation, we introduce the binary parameters $\theta^w_{e}$ and $\theta^w_{i}$ taking value $1$ if edge $e$ (respectively node $i$) is essential for $w$. These new points are stored in a set $L$. Each time the algorithm finds a variable that cannot be set to $0$, we store it in sets $\bar N,\, \bar E,\, \bar W$, respectively. At the end of the algorithm, the dimension of the convex hull of $Proj_{\boldsymbol{x},\boldsymbol{y},\boldsymbol{z}}(\mathcal{F}(PC\_R))$ is 
$$
\dim(\mathcal P_{\boldsymbol{x},\boldsymbol{y},\boldsymbol{z}}) = \vert N\vert + \vert E\vert + \vert W\vert - (\vert \bar{N} \vert + \vert \bar E \vert + \vert \bar{W} \vert).
$$

\noindent This procedure is depicted in Algorithm \ref{pre_2}.

\begin{algorithm}[htpb] 
	\caption{Computing the dimension of the polytope of $(PC)$} 
	\label{pre_2}
	\begin{algorithmic} 
		\STATE {\bf Initialization:} Set $\bar N = \emptyset$, $\bar E = \emptyset$, $\bar W = \emptyset$ and $L = \emptyset$ 
		\STATE Add to set $L$: $\left( y_i = 1, i\in N,\quad x_e = 1, e \in E,\quad z^w = 1, w \in W\right).$
		\FOR{$w' \in W$}
		\IF{$\sum\limits_{w \in W\setminus\{w'\}} g^w \ge \beta\, G$}
		\STATE		
		\STATE Add to set $L$:
		$\left(y_i = 1, i\in N,\quad x_e = 1, e \in E,\quad z^{w'} = 0, \, z^w = 1, w \in W\setminus\{w'\}\right).$
		\ELSE 
		\STATE $\bar W = \bar W \cup \{w'\}$ 
		\FOR{$e=\{i,j\} \in E$}
		\STATE Compute shortest path between $w'^{s}$ and $w'^{t}$ in the graph $(N^{w'}, E^{w'}\setminus\{e\})$.
		\IF{the length of the shortest path is greater than $u^{w'}$ \OR there is no path between $w'^s$ and $w'^t$}
		\STATE $\bar E = \bar E \cup \{e\}$ and $\bar N = \bar N \cup \{i,j\} $
		\ENDIF
		\ENDFOR
		\ENDIF
		\ENDFOR
		\FOR{$e' \in E\setminus \bar{E}$}
		\STATE Add to set $L$:
		$\left(y_i = 1, \, i\in N,\quad x_e = 1, e \in E\setminus \{e'\}, x_{e'}=0,\quad z^w = 1-\theta^w_{e'}, w \in W\right).$
		\ENDFOR
		\FOR{$i' \in N\setminus \bar{N}$}
		\STATE Add to set $L$:
		
		$\bigl(y_{i'}=0, \, y_i = 1, i\in N \setminus \{i'\},\quad x_e = 0, i'\in e, \, x_e = 1, i'\notin e,\quad z^w = 1-\theta^w_{i'}, w\in W\bigr).$
		\ENDFOR
		\STATE $\dim(\mathcal P_{\boldsymbol{x},\boldsymbol{y},\boldsymbol{z}}) = \vert N\vert + \vert E\vert + \vert W\vert - (\vert \bar N \vert + \vert \bar{E} \vert + \vert \bar W\vert).$
		\RETURN $\bar N, \,\bar E,\, \bar W$, $L$ and $\dim(conv(P_{\boldsymbol{x},\boldsymbol{y},\boldsymbol{z}}))$.
	\end{algorithmic}
\end{algorithm}

Algorithm \ref{pre_2} allows : i) to set some binary variables equal to $1$, decreasing the problem size; and ii) to compute a relative interior point of the convex hull of $Proj_{\boldsymbol{x},\boldsymbol{y},\boldsymbol{z}}(\mathcal{F}(PC\_R))$, necessary for the \textit{facet-defining cuts}, as explained below in Section \ref{s:facertdef}. The relative interior point is given by the average of the points in set $L$.

\paragraph{Example 1 cont}
Regarding the previous example and following Algorithm \ref{pre_2}, the O/D pair $(1,4)$ must be covered, $z^{(1,4)}=1$. Due to that, as its shortest path in the networks $(N^{(1,4)}, E^{(1,4)}\setminus\{\{1,2\}\})$ and $(N^{(1,4)}, E^{(1,4)}\setminus\{\{2,4\}\})$ is greater than $u^{(1,4)}=15$, variables $x_{\{1,2\}}$, $x_{\{2,4\}}$, $y_1$, $y_2$, $y_4$ are set to $1$. Finally, the dimension of this polyhedron is
$$\dim(P_{\boldsymbol{x},\boldsymbol{y},\boldsymbol{z}}) = 4+4+3-(3+2+1)=5.$$

The relative interior point computed is:
\begin{align*}
\,x_{\{1,2\}} = 1,\,\,x_{\{2,4\}} = 1, x_{\{1,3\}} = \frac{5}{6}, \,x_{\{3,4\}} = \frac{2}{3},\, y_1 = 1,\, y_2 = 1,\, y_3 = \frac{5}{6},\, y_4 = \frac{5}{6},\\
z^{(1,4)} = 1,\,z^{(2,4)} = \frac{5}{6},\,z^{(3,4)} = \frac{1}{2}.
\end{align*}

\subsection{Setting an initial solution} \label{s:initial_solution}

We determine an initial feasible solution for $(MC)$ and $(PC)$ with a simple greedy heuristic in which we sequentially select O/D pairs with best ratio demand overbuilding cost. More precisely, given the potential network $\mathcal{N} = (N,E)$, we compute for each O/D pair $w$ the ratio $r_w = \frac{g^w}{C(\mbox{Path}_w)}$, where $C(\mbox{Path}_w)$ is the cost of a feasible path for $w$. We order these ratios decreasingly. We use this initial order in the heuristic for both $(MC)$ and $(PC)$.
For $(MC)$ the method proceeds as follows. It starts with an empty list of built nodes and edges, an empty list of O/D pairs covered, and a total cost set to $0$. For each O/D pair $w$, in decreasing order of $r_w$, the heuristic tries to build Path$_w$ considering edges and nodes that are already built. If the additional cost plus the current cost is less than the budget $C_{max}$, nodes and edges in Path$_w$ are built and the O/D pair $w$ is covered (i.e. $z^w = 1$). The total cost, the lists of built nodes and edges are updated. Otherwise we proceed with the next O/D pair. At the end of the algorithm we have an initial feasible solution.

To get an initial solution for $(PC)$ we start with a list of all the O/D pairs covered and the amount of population covered equal to $G$. For each O/D pair $w$, in decreasing order of $r_w$, the algorithm checks if by deleting the O/D pair $w$ from the list, the coverage constraint (\ref{eq:PCconst}) is satisfied. If so, the O/D pair $w$ is deleted from the list and the amount of population covered is updated. Finally, the algorithm builds the union of the subgraphs $(\widetilde{N}^w,\widetilde{E}^w)$ induced by Path$_w$ for all the O/D pairs covered. Note that both initial solutions can be computed by solving $\vert W \vert$ shortest paths problems. These tasks can be executed  much faster  than solving  $(MC)$ and $(PC)$ to optimality.

Pseudo-codes for both routines are provided in \ref{ap:mipstart}. In Section \ref{s:computational}, we will show the efficiency of adding this initial solution at the beginning of the branch-and-Benders-cut procedure.

\section{Benders Implementations}\label{sec:BImp}
In the following, we describe different Benders implementations for $(MC)$ and $(PC)$ obtained by projecting out variables $f^w_a$. Given that $(MC)$ and $(PC)$ share the same subproblem structure the Benders decomposition applied to $(MC)$ is valid for $(PC)$ and vice versa. Thus, we will apply the same Benders decomposition for both problems throughout this manuscript. These implementations are used as sub-routines in a branch-and-Benders-cut scheme. This scheme allows cutting infeasible solutions along the branch-and-bound tree. Depending on the implementation, infeasible solutions can be separated at any node in the branch-and-bound tree or only when an integer solution is found. In the case of $(MC)$, the master problem that we solve is:
\begin{align}
(M\_MC) \quad \max\limits_{\boldsymbol{x},\boldsymbol{y},\boldsymbol{z}} & \quad \sum_{w\in W} g^w z^w \\
\mbox{s.t. } & \quad \mbox{(\ref{eq:budget}), \, (\ref{rel_xy}), \, (\ref{eq:nat})} & \nonumber\\
&\quad +\{\mbox{Benders Cuts }(\boldsymbol{x},\boldsymbol{y},\boldsymbol{z})\}. \nonumber
\end{align}
\noindent The master problem for $(PC)$, named $(M\_PC)$, is stated analogously.

In Section \ref{s:lpfeas}, we discuss the standard \textit{Benders cuts} obtained by dualizing the respective feasibility subproblem. Then, in Section \ref{s:normalben} we discuss ways of generating normalized subproblems, to produce stronger cuts. We name these cuts \textit{normalized Benders cuts}. In Section \ref{s:facertdef}, we apply {\it facet-defining cuts} in order to get stronger cuts, as it is proposed in \cite{Conforti2019facet}. Finally, we discuss an implementation where, at the beginning, \textit{cut-set inequalities} are added to enhance the link between $\boldsymbol{z}$ and $\boldsymbol{x}$, and then {\it Benders cuts} are added.

\subsection{LP feasibility cuts} \label{s:lpfeas}

Since the structure of the model allows it, we consider a feasibility subproblem made of constraints (\ref{flow_cons}), (\ref{Capacity}), (\ref{Utility}) and (\ref{eq:nneg-f}) for each commodity $w\in W$, denoted by $(SP)^w$. We note that each subproblem is feasible whenever $z^w=0$, so it is necessary $(SP)^w$ to check feasibility only in the case where $z^w>0$. As it is clear from the context, we remove the index $w$ from the notation. The dual of each feasibility subproblem can be expressed as: 
\begin{align}
(DSP)^w\quad \max\limits_{\boldsymbol{\alpha},\boldsymbol{\sigma},\boldsymbol{\upsilon}}&\quad z\,\alpha_{w^s} - \sum_{e \in E} x_e\,\sigma_e - u\, z\,\upsilon\\
\mbox{s.t. }&\quad \alpha_i - \alpha_j - \sigma_e - d_a\, \upsilon \le 0,& a=(i,j) \in A : e=\{i,j\}, \\
&\quad \sigma_e, \, \upsilon \ge 0, & e\in E,
\label{sub:trd}
\end{align}
\noindent where $\boldsymbol{\alpha}$ is the vector of dual variables related to constraints (\ref{flow_cons}), $\boldsymbol{\sigma}$ is the vector of dual variables corresponding to the set of constraints (\ref{Capacity}) and $\boldsymbol{\upsilon}$ is the dual variable of constraint (\ref{Utility}). Since constraints in (\ref{flow_cons}) are linearly dependent, we set $\alpha_{w^t}=0$. Given a solution of the master problem $(\boldsymbol{x},\boldsymbol{y},\boldsymbol{z})$, there are two possible outcomes for $(SP)^w$:
\begin{enumerate}
\item $(SP)^w$ is infeasible and $(DSP)^w$ is unbounded. Then, there exists an increasing direction $(\boldsymbol{\alpha},\boldsymbol{\sigma},\boldsymbol{\upsilon})$ with positive cost. In this case,
the current solution $(\boldsymbol{x},\boldsymbol{y},\boldsymbol{z})$ is cut by:
\begin{equation}
 (\alpha_{w^s} - u\,\upsilon)\,z - \sum_{e \in E} \sigma_e\, x_e \le 0. \label{eq:feasibility_cut}
\end{equation}
\item $(SP)^w$ is feasible and consequently, $(DSP)^w$ has an optimal objective value equal to zero. In this case, no cut is added. 
\end{enumerate}

\subsection{Normalized Benders cuts} \label{s:normalben}

The overall branch-and-Benders-cut performance heavily relies on how the cuts are implemented. It is known that feasibility cuts may have poor performance due to the lack of ability of selecting a {\it good} extreme ray (see for example \cite{Fischetti2010note, Ljubic2012exact}). However, normalization techniques are known to be efficient to overcome this drawback \cite{Magnanti1981accelerating, balas2002lift,balas2003precise}. The main idea is to transform extreme rays in extreme points of a suitable polytope. In this section we study three ways to normalize the dual subproblem described above.
 
 First, we note that the feasibility subproblem can be reformulated as a min cost flow problem in $\mathcal{N}^w$ with capacities $\boldsymbol{x}$ and arc costs $d_a$.
 \begin{align}
(NSP)^w \quad  \min\limits_{\boldsymbol{x},\boldsymbol{y},\boldsymbol{z},\boldsymbol{f}} & \quad \sum_{a\in A} d_a\, f_a & \\
\mbox{s.t. } & \quad \mbox{(\ref{flow_cons}), (\ref{Capacity}), (\ref{eq:nneg-f}).} & \nonumber %\\
%&\textbf{f}\ge 0& \nonumber
\end{align}
The associated dual subproblem is:
\begin{align}
(DNSP)^w \quad \max\limits_{\boldsymbol{\alpha},\boldsymbol{\sigma}} & \quad z\,\alpha_{w^s} - \sum_{e \in E} \sigma_e x_e \\
\mbox{s.t. } & \quad  \alpha_i - \alpha_j - \sigma_e \le d_a,& a=(i,j) \in A : e=\{i,j\}, \\
&\quad\sigma_e \ge 0,& e \in E.
\end{align}
Whenever $z^w>0$, the primal subproblem $(NSP)^w$ may be infeasible. Subproblems $(NSP)^w$ are no longer feasibility problems, although some of their respective dual forms can be unbounded. As the  splitting demand constraint has to be satisfied there are two kind of cuts to add:
\begin{enumerate}
 \item $(NSP)^w$ is infeasible and $(DNSP)^w$ is unbounded. In this case, the solution $(\boldsymbol{x},\boldsymbol{y},\boldsymbol{z})$ is cut by the constraint
 \begin{equation}
 \alpha_{w^s}\, z - \sum\limits_{e\in E}\sigma_e \, x_e \leq 0.
 \label{cut:norm1}
 \end{equation}
 \item $(NSP)^w$ is feasible and $(DNSP)^w$ has optimal solution. Consequently, if their solutions $(\boldsymbol{\alpha},\boldsymbol{\sigma})$ and $(\boldsymbol{x},\boldsymbol{y},\boldsymbol{z})$ satisfy that $\alpha_{w^s}\, z - \sum_{e\in E}\sigma_e \, x_e > u\, z$ then, the following cut is added
 \begin{equation}
 (\alpha_{w^s} - u)\, z - \sum\limits_{e\in E}\sigma_e \, x_e \leq 0.\label{cut:norm2}
 \end{equation}
\end{enumerate}

\noindent We refer to this implementation as \texttt{BD\_Norm1}. 

In this situation, there still exists dual subproblems $(DNSP)^w$ with extreme rays. We refer to \texttt{BD\_Norm2} as second dual normalization obtained by adding the dual constraint $\alpha_{w^s} = u + 1$. In this case, every extreme ray of $(SP)^w$ corresponds to one of the extreme points of $(NSP)^w$. A cut is added whenever the optimal dual objective value is positive. This cut has the following form:

\begin{equation}
z - \sum_{e \in E} \sigma_e\, x_e \le 0. \label{eq:normalized_cut} 
\end{equation}

We finally tested a third dual normalization, \texttt{BD\_Norm3}, by adding constraints
\begin{equation}
\sigma_e \leq 1,\qquad e \in E, \label{eq:normalized_cut2} 
\end{equation}
\noindent directly in $(DSP)^w$.

We tested the three dual normalizations described above for $(MC)$ using randomly generated networks with $10$, $20$ and $40$ nodes, as described in Subsection \ref{datasets}. As we will see in Section \ref{s:preliminary_results}, only \texttt{BD\_Norm1} results to be competitive.

\subsection{Facet-defining Benders cuts} \label{s:facertdef}

Here we describe how to generate Benders cuts for $(MC)$ based on the ideas exposed in \cite{Conforti2019facet}, named as $CW$. The procedure for $(PC)$ is the same. Given an \textit{interior point} or \textit{core point} $(\boldsymbol{x}^{in}, \boldsymbol{y}^{in}, \boldsymbol{z}^{in})$ of the convex hull of feasible solutions and an \textit{exterior point} $(\boldsymbol{x}^{out}, \boldsymbol{y}^{out}, \boldsymbol{z}^{out})$, that is a solution of the LP relaxation of the current restricted master problem, a cut that induces a facet or an improper face of the polyhedron defined by the LP relaxation of $Proj_{\boldsymbol{x},\boldsymbol{y},\boldsymbol{z}}\mathcal{F}(MC)$ is generated. We denote the difference $\boldsymbol{x}^{out} - \boldsymbol{x}^{in}$ by $\Delta \boldsymbol{x}$. We define $\Delta \boldsymbol{y}$ and $\Delta \boldsymbol{z}$ analogously. The idea is to find the furthest point from the core point, feasible to the LP-relaxation of $Proj_{\boldsymbol{x},\boldsymbol{y},\boldsymbol{z}}\mathcal{F}(MC)$ and lying on the segment line between the \textit{core point} and the \textit{exterior point}. This point is of the form $(\boldsymbol{x}^{sep}, \boldsymbol{y}^{sep}, \boldsymbol{z}^{sep}) = (\boldsymbol{x}^{out}, \boldsymbol{y}^{out}, \boldsymbol{z}^{out}) - \lambda (\Delta \boldsymbol{x}, \Delta \boldsymbol{y} , \Delta \boldsymbol{z})$. The problem of generating such a cut reads as follows:
\begin{align}
(SP\_CW)^w \quad \min_{\boldsymbol{f},\lambda} & \quad \lambda \\
\mbox{s.t. }& \quad \sum_{a \in \delta_w^+(i)} f_a -\sum_{a \in \delta_w^-(i)} f_a = 
 \begin{cases}
z^{out} - \lambda\,\Delta z, & \text{if $i = w^s$,}\\
0, & \text{otherwise,} 
\end{cases} \\
&\quad  f_a + f_{\hat{a}} \le x^{out}_e - \lambda\, \Delta x_e, \quad\qquad e=\{i,j\} \in E : a=(i,j), \hat{a}=(j,i),\\
& \quad \sum_{a \in A} d_a\, f_a \le u\,z^{out} - u\,\Delta z\, \lambda, \\
&\quad  0\le \lambda \le 1, \\
&\quad  f_a \ge 0,\hspace{8cm} a \in A.
\end{align}
In order to obtain the Benders feasibility cut we solve its associated dual:
\begin{align}
(DSP\_CW)^w \quad \max_{\boldsymbol{\alpha}, \boldsymbol{\sigma},\boldsymbol{\upsilon}} &\quad z^{out}\,\alpha_{w^s} - \sum_{e \in E} x^{out}_e\,\sigma_e - u\,z^{out}\, \upsilon \\
\mbox{s.t. }&\quad \Delta z\, \alpha_{w^s} - \sum_{e \in E} \Delta x_e\,\sigma_e - u\, \Delta z\, \upsilon \le 1, \label{eq:normcw}\\
&\quad \alpha_i - \alpha_j -\sigma_e - d_a\,\upsilon \le 0,\qquad\qquad\quad a=(i,j) \in A : e=\{i,j\},\nonumber\\
&\quad \sigma_e,\,\upsilon\ge 0, \hspace{6.9cm} e\in E. \nonumber
\label{sub:CW}
\end{align}
Given that $(SP\_CW)^w$ is always feasible ($\lambda = 1$ is feasible) and that its optimal value is lower bounded by 0, then, both $(SP\_CW)^w$ and $(DSP\_CW)^w$ have always finite optimal solutions. Whenever the optimal value of $\lambda$ is 0, $(\boldsymbol{x}^{out}, \boldsymbol{y}^{out}, \boldsymbol{z}^{out})$ is feasible. A cut is added if the optimal value of $(DSP\_CW)^w$ is strictly greater than 0. The new cut has the same form as in (\ref{eq:feasibility_cut}). Note that this problem can be seen as a dual normalized version of $(SP)^w$ with the dual constraint \eqref{eq:normcw}. This approach is an improvement in comparison with the stabilization cuts proposed by \cite{ben2007acceleration}, where $\lambda$ is a fixed parameter.

Core points for both formulations can be obtained by computing the average of the points described in the proof of Proposition \ref{prop_MC} for $(MC)$ and the average of the points in list $L$ obtained by applying Algorithm~\ref{pre_2}.

\subsection{Cut-set inequalities} \label{s:cutset}

By projecting out variable vector $\boldsymbol{f}$, information regarding the link between vectors $\boldsymbol{x}$ and $\boldsymbol{z}$ is lost. {\it Cut-set inequalities} represent the information lost regarding the connectivity for the O/D pair $w$ in the solution given by the design variable vector $\boldsymbol{x}$. Let $(S,S^C)$ be a $(w^s,w^t)$-partition of $N^w$ for a fixed O/D pair $w$, i.e. $(S,S^C)$ satisfies: i. $w^s\in S$; ii. $w^t \in S^C$, with $S^C=N\setminus S$ its complement. A {\it cut-set inequalities} is defined as
\begin{equation}
z^w \leq \sum_{\substack{\{i,j\} \in E^w: \\ i\in S, \, j \in S^C}} x_{\{i,j\}}, \quad w \in W,\quad (S,S^C) \mbox{ a } (w^s,w^t)\mbox{-partition of } N^w. \label{eq:cutset}
\end{equation}
\noindent This type of constraints has been studied in several articles, for instance \cite{Barahona1996network,Koster2013extended, Costa2009benders}. Note that it is easy to see that cut-set inequalities belong to the LP-based Benders family. Let $(S,S^C)$ be a $(w^s,w^t)$-partition in the graph $\mathcal{N}^w$ for $w\in W$. Consider the following dual solution:

\begin{itemize}
	\item $\alpha_i = 1$ if $i\in S$; $\alpha_i = 0$ if $i\in S^C$.
	\item $\sigma_e=1$ if $e=\{i,j\} \in E^w$, $i\in S$, $j \in S^C$; $\sigma_e=0$, otherwise.
	\item $\upsilon = 0$.
\end{itemize}

This solution is feasible to $(DSP)^w$ and induces a cut as in (\ref{eq:cutset}). In order to improve computational performance, we test two approaches to include these inequalities:

\begin{enumerate}
	\item We implement a modification of the Benders callback algorithm with the following idea. First, for each $w\in W$, using the solution vector $(\boldsymbol{x},\boldsymbol{y})$ from the master, the algorithm generates a network $(N^w,E^w)$ with capacity $1$ for each edge built. Then, a \textit{Depth-First Search (DFS)} algorithm is applied to obtain the connected component containing $w^s$. If the connected component does not contain $w^t$, a cut of the form (\ref{eq:cutset}) is added. Otherwise, we generate a Benders cut as before. This routine is depicted in Algorithm \ref{cut_set_implment}.

	\begin{algorithm}[htpb] 
		\caption{Callback implementation with cut-set inequalities.} 
		\label{cut_set_implment} 
		\begin{algorithmic} 
			\REQUIRE ($x_e, e\in E$, $z^w, w\in W$) from the master solution solution $(\boldsymbol{x},\boldsymbol{y},\boldsymbol{z})$.
			\FOR{$w \in W$}
				\STATE Build graph $(N^w({\boldsymbol x}),E^w({\boldsymbol x}))$ induced by the solution vector $\boldsymbol{x}$ from the master.
					
				\STATE Compute the connected component $S$ in $(N^w({\boldsymbol x}),E^w({\boldsymbol x}))$ containing $w^s$.
				\IF{$w^t$ is not included in $S$}
					\STATE Add the cut $
							z^w \leq \sum_{\substack{\{i,j\} \in E^w: \\ i\in S, \, j \in S^C}} x_{\{i,j\}}
							$
				\ELSE
					\STATE Solve the corresponding subproblem ($(DSP)^w$, $(DNSP)^w$, $(DSP\_CW)^w$) and add cut if it is necessary.
				\ENDIF
				\ENDFOR
			\RETURN Cut.
		\end{algorithmic}\label{algorithm:3}
	\end{algorithm}

We tested this implementation with subproblems $(DSP\_CW)^w$. We observe that by using Algorithm \ref{algorithm:3} with $CW$ the convergence is slower and we generate more cuts. These preliminary results are shown in Table \ref{t:pre_res_cs}.

	\item We add to the \textit{Master Problem} the \textit{cut-set inequalities} at the origin and at the destination of each O/D pair $w\in W$ at the beginning of the algorithm. These valid inequalities have the form:
	
	\begin{gather}
	\begin{cases}
	z^w \leq \sum\limits_{e \in \delta(w^s)} x_e ,\\
	z^w \leq \sum\limits_{e \in \delta(w^t)} x_e .
	\end{cases}\label{eq:cutset2}
	\end{gather}

	This means that for each O/D pair to be covered, there should exist at least one edge incident to its origin and one edge incident to its destination, i.e. each O/D pair should have at least one arc going out of its origin and another one coming in its destination.
\end{enumerate}

\section{Computational Results} \label{s:computational}

In this section, we compare the performance of the different families of \textit{Benders cuts} presented in Section \ref{sec:BImp} using the branch-and-Benders-cut algorithm (denoted as \texttt{B\&BC}).

All our computational experiments were performed on a computer equipped with a Intel Core i$5$-$7300$ CPU processor, with $2.50$ gigahertz $4$-core, and $16$ gigabytes of RAM memory. The operating system is $64$-bit Windows $10$. Codes were implemented in Python 3.8. These experiments have been carried out through \texttt{CPLEX 12.10} solver, named \texttt{CPLEX}, using its Python interface. \texttt{CPLEX} parameters were set to their default values and the models were optimized in a single threaded mode.

For that, \texttt{t} denotes the average value for solution times given in seconds, \texttt{gap} denotes the average of relative optimality gaps in percent (the relative percent difference between the best solution and the best bound obtained within the time limit), \texttt{LP gap} denotes the average of LP gaps in percent and \texttt{cuts} is the average of number of cuts generated.

\subsection{Data sets: benchmark networks and random instances}\label{datasets}

We divide the tested instances into two groups: {\it benchmarks instances} and {\it random instances}. Our {\it benchmarks instances} are composed by the Sevilla \citep{garcia2013grasp} and Sioux Falls networks \citep{sioux}. 

The Sevilla instance is composed partially by the real data given by the authors of \cite{garcia2013grasp}. From this data, we have used the topology of the underlying network, cost and distance vector for the set of arcs and the demand matrix. This network is composed of $49$ nodes and $119$ edges. Originally, the set of O/D pairs $W$ was formed by all possible ones ($49\cdot48=2352$). However, some entries in the demand matrix of this instance are equal to 0 and we thus exclude them from the analysis. Specifically, $630$ pairs have zero demand, almost the $27\%$ of the whole set. We consider a private utility $u$ equal to twice the shortest path length in the underlying network. Each node cost is generated according to a uniform distribution $\mathcal{U}(7,13)$. The available budget has been fixed to $30\%$ of the cost of building the whole underlying network and the minimum proportion of demand to be covered to $\beta=0.5$.

For the Sioux Falls instance, the topology of the network is described by $24$ nodes and $38$ edges. Set $W$ is also formed by all possible O/D pairs ($38\cdot37 = 1406$). The parameters have been chosen in the same manner as for the \textit{random instances}.

We generate our {\it random instances} as follows. We consider planar networks with a set of $n$ nodes, with $n \in \{10,20,40,60\}$. Nodes are placed in a grid of $n$ square cells, each one of $10$ units side. For each cell, a point is randomly generated close to the center of the cell.
For each setting of nodes we consider a planar graph with its maximum number of edges, deleting each edge with probability 0.3. We replicated this procedure $10$ times for each $n$, so that the number of nodes is the same while the number of edges may vary. Therefore, there are $40$ different underlying networks. We name these instances as $N10$, $N20$, $N40$ and $N60$. We provide the average cycle availability, connectivity and density for {\it random instances} networks in Table \ref{tabla:indicators}. A couple of them are depicted in Figure \ref{fig:N11N22}.

\begin{figure}[H]
\centering
\begin{tabular}{C{0.4\textwidth}C{0.4\textwidth}}
\includegraphics[height = 3cm]{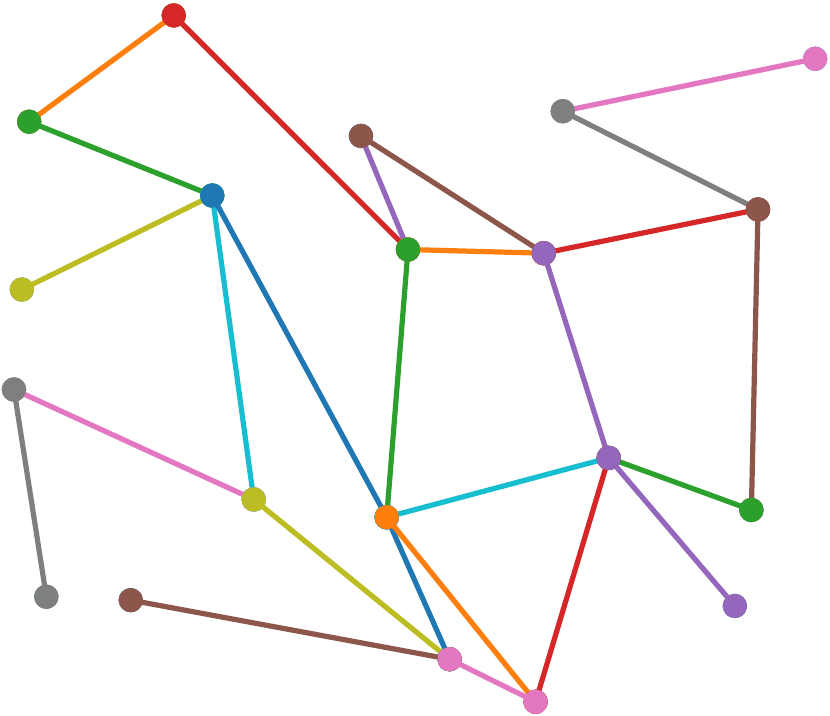} &
\includegraphics[height = 3cm]{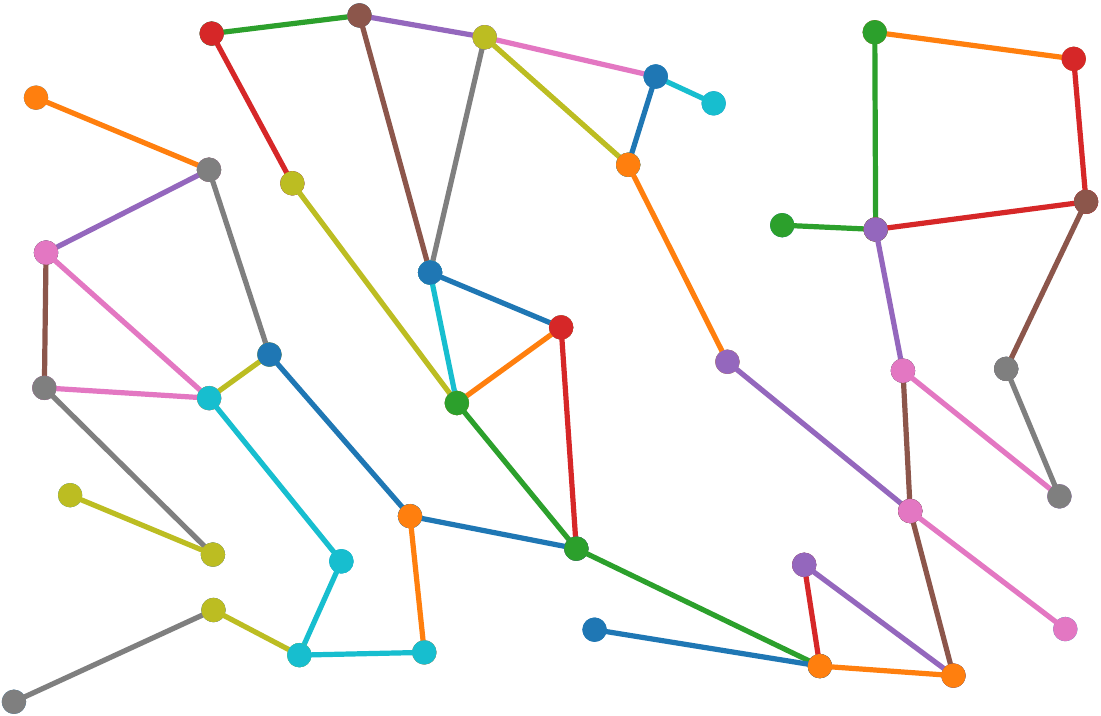}
\end{tabular}

\caption{Example of underlying networks with $|N|$=20 and $|N|$=40.}
\label{fig:N11N22}
\end{figure}

\begin{table}[htb]
\small
\centering
\begin{tabular}{cccc}
\hline
 Network & Cycle availability & Connectivity & Density \\
 & $\frac{|E|-|N|+1}{2|N|-5}$ & $\frac{|E|}{|N|}$ & $\frac{|E|}{3(|N|-2)}$ \\
\hline
N10 & $0.11$ & $1.05$ & $0.44$ \\		
N20 & $0.11$ & $1.12$ & $0.41$ \\		
N40 & $0.13$ & $1.22$ & $0.43$ \\		
N60 & $0.16$ & $1.29$ & $0.45$ \\		
\hline
Overall & $0.12$ & $1.17$ & $0.43$ \\
\hline
 
\end{tabular}
\caption{Cycle availability, connectivity and density parameters for the underlying networks in {\it random instances}.}
\label{tabla:indicators}
\end{table}

Construction costs $b_i$, $i\in N$, are randomly generated according to a uniform distribution $\mathcal{U}(7,13)$. So, each node costs 10 monetary units in average. Construction cost of each edge $e \in E$, $c_e$, is set to its Euclidean length. This means that building the links cost 1 monetary unit per length unit. The node and edge costs are rounded to integer numbers. We set $C_{max}$ equal to $50\%$ of the cost of building the whole underlying network considered. We denote this total cost as $TC$, so $C_{max} = 0.5 \, TC$.

To build set $W$, we randomly pick each possible O/D pair of nodes with probability 0.5. In consequence, this set has $\frac{n(n-1)}{2}$ pairs in average. Parameter $u^w$ is set to 2 times the length of the shortest path between $w^s$ and $w^t$, named as $SPath^w$. Finally, the demand $g^w$ for each O/D pair $w$ is randomly generated according to the uniform distribution $\mathcal{U}(10,300)$.

\subsection{Preliminary experiments} \label{s:preliminary_results}

Before presenting an extensive computational study of the algorithms, we provide some preliminary results to: i. analyze the efficiency of the formulation presented in \cite{garcia2013grasp}; ii. the efficiency of the cut normalizations described in Section \ref{s:normalben} and, iii. the performance of the cut-set based Branch-and-cut procedure described in Section \ref{s:cutset}. 

We first show that our formulation using \eqref{Capacity}-\eqref{Utility} is not only stronger than the one formulated with \eqref{eq:cap_ardila}-\eqref{eq:uti_ardila} but also more efficient.  Table \ref{table:formulations} shows some statistics for the two formulations discussed at the end of Section \ref{s:formulations}, for instances with 10 and 20 nodes. We also tested instances with 40 nodes but most of them were not solved to optimality within one hour. In that case, we provide the optimality gap instead of the solution time. We consider 5 instances of each size. Note that constraints \eqref{eq:uti_ardila} are equivalent to constraints \eqref{Utility} by setting $M=0$. We tested several positive values for $M$.

\begin{table}[H]
\small
\centering
\begin{tabular}{c cc cc}
\hline
\multirow{2}{*}{Network} & \multicolumn{2}{c}{\texttt{Formulation using \eqref{Capacity}-\eqref{Utility}}} & \multicolumn{2}{c}{\texttt{Formulation using \eqref{eq:cap_ardila}-\eqref{eq:uti_ardila}}} \\ \cline{2-5} 
 & \texttt{t} & \texttt{LP gap} & \texttt{t} &\texttt{LP gap} \\ \hline
N10 & 0.17 & 43.21 & 0.26 & 96.43 \\ %\hline
N20 & 5.78 & 56.33 & 228.22 & 106.71 \\ 
\hline
 & \texttt{gap} & \texttt{LP gap} & \texttt{gap} &\texttt{LP gap}\\ 
\hline
N40 & 11.74 & 68.15 & 54.85 & 137.13 \\ \hline
\end{tabular}\caption{Comparing the performance of the two different types of mode choice and capacity constraints for $(MC)$ within a time limit of 1 hour. The majority of N40 instances were not solved to optimality, then the average gap is shown.}
\label{table:formulations}
\end{table}

Secondly, we tested the three dual normalizations described in Section \ref{s:normalben} for $(MC)$. Table \ref{table: algorithm_CS} shows average values obtained for solution time in seconds and number of cuts needed for this experiment. The only one that seems competitive is \texttt{BD\_Norm1}. We observed that cut coefficients generated with \texttt{BD\_Norm1} are mainly 0's or 1's. In the case of \texttt{BD\_Norm2} and \texttt{BD\_Norm3} we observe that coefficients generated are larger than the ones generated by \texttt{BD\_Norm1}, so they may induce numerical instability. This situation is similar for the case of $(PC)$.

\begin{table}[htpb]
\small
\centering
\begin{tabular}{c cc cc cc}
\hline
\multirow{2}{*}{Network} & \multicolumn{2}{c}{\texttt{BD\_Norm1}} & \multicolumn{2}{c}{\texttt{BD\_Norm2}} & \multicolumn{2}{c}{\texttt{BD\_Norm3}} \\ \cline{2-7} 
 & \texttt{t} & \texttt{cuts} & \texttt{t} &\texttt{cuts} & \texttt{t} & \texttt{cuts} \\ \hline
N10 & 0.21 & 44 & 0.22 & 47 & 0.24 & 104 \\ %\hline
N20 & 2.83 & 362 & 5.76 & 595 & 5.22 & 1418 \\ %\hline
N40 & 687.88 & 2904 & * & * & * & * \\ \hline
\end{tabular}\caption{Comparing the performance of the three dual normalizations within a time limit of 1 hour for $(MC)$. N10, N20 and N40 are refereed to networks with 10, 20 and 40 nodes, respectively. The mark '*' indicates that four over five instances were not solved within 1 hour.}
\label{table: algorithm_CS}
\end{table}

Finally, we tested the cut-set inequalities implementation described in Section \ref{s:cutset} with subproblems $(DSP\_CW)^w$. We observe that by using Algorithm \ref{algorithm:3} with $CW$ the convergence is slower and we generate more cuts. This might be due to the fact that these cuts do not include information about the length of the path in the graph, but only information regarding the existence of the path. These preliminary results are shown in Table \ref{t:pre_res_cs}, which provides average values obtained for solution times in seconds and the number of cuts added.

\begin{table}[H]
\small
\centering
\begin{tabular}{c cc cc}
\hline
\multirow{2}{*}{Network} & \multicolumn{2}{c}{\texttt{BD\_CW}} & \multicolumn{2}{c}{\texttt{Algorithm \ref{algorithm:3}+BD\_CW}} \\ \cline{2-5} 
 & \texttt{t} & \texttt{cuts} & \texttt{t} & \texttt{cuts} \\ \hline
N10 & 0.23 & 48 & 0.15 & 46 \\ %\hline
N20 & 2.47 & 411 & 2.53 & 500\\ %\hline
N40 & 619.31 & 3486 & 722.02 & 3554 \\ \hline
\end{tabular}\caption{Comparing the performance of the Algorithm \ref{algorithm:3} for $(MC)$. N10, N20 and N40 refer to networks with 10, 20 and 40 nodes respectively.}
\label{t:pre_res_cs}
\end{table}

In conclusion, all these three implementations, with the exception of \texttt{BD\_norm1}, are excluded from further analysis.

\subsection{Branch-and-Benders-cut performance}

Our preliminary experiments show that including cuts only at integer nodes of the branch and bound tree is more efficient than including them in nodes with fractional solutions. Thus, in our experiments we only separate integer solutions unless we specify the opposite. We used the \texttt{LazyCons\-traintCallback} function of \texttt{CPLEX} to separate integer solutions. Fractional solutions were separated using the \texttt{UserCutCallback} function. We study the different implementations of \texttt{B\&BC} proposed in Sections \ref{s:lpfeas}, \ref{s:normalben} and \ref{s:facertdef}. We use the following nomenclature:
\begin{itemize}
	\item \texttt{BD\_Trd}: \texttt{B\&BC} algorithm using the feasibility subproblems structure $(DSP)^w$, and its corresponding feasibility cuts \eqref{eq:feasibility_cut}.
	\item \texttt{BD\_Norm}: \texttt{B\&BC} algorithm using the normalized subproblems structure $(DNSP)^w$, and its corresponding cuts \eqref{cut:norm1} and \eqref{cut:norm2}.
	\item \texttt{BD\_CW}: \texttt{B\&BC} algorithm using the subproblems structure $(DSP\_CW)^w$, and feasibility cuts \eqref{eq:feasibility_cut}. 
\end{itemize}

We compare our algorithms with the direct use of \texttt{CPLEX}, and the automatic Benders procedure proposed by \texttt{CPLEX}, noted by \texttt{AUTO\_BD}. \texttt{CPLEX} provides different implementations depending on the information that the user provides to the solver: i. \texttt{CPLEX} attempts to decompose the model strictly according to the decomposition provided by the user; ii. \texttt{CPLEX} decomposes the model by using this information as a hint and then refines the decomposition whenever possible; iii. \texttt{CPLEX} automatically decomposes the model, ignoring any information supplied by the user. We have tested these three possible settings, and only the first one is  competitive. 

Furthermore we have tested the following features:

\begin{itemize}
\item \texttt{CS}: If we include {\it cut-set inequalities} at each origin and destination as in \eqref{eq:cutset2}.
\item \texttt{IS}: If we provide an initial solution to the solver.
\item \texttt{RNC}: If we add Benders cuts at the root node.
\end{itemize}

\subsection{Performance of the algorithms on random instances}

All the experiments have been performed with a limit of one hour of CPU time considering $10$ instances of each size. Tables in this section show average values obtained for solution times in seconds, relative gaps in percent, and number of cuts needed. To determine these averages, we only consider the instances solved at optimality by all the algorithms. 

First, we compare the performance of \texttt{CPLEX} for formulations $(MC)$ and $(PC)$ and the three different \texttt{B\&BC} implementations described above (\texttt{BD\_Trd}, \texttt{BD\_Norm} and \texttt{BD\_CW}). We also study the impact of the initial cut set inequalities (\texttt{CS}) in the efficiency of the proposed algorithms. Table \ref{table:perfN102040} shows the performance of the algorithms for networks N10, N20 and N40.  All the algorithms are able to solve at optimality N10 and N20 instances  in less than 7 seconds for $(MC)$ and $(PC)$. For $(MC)$ without \texttt{CS}, the fastest algorithm is \texttt{BD\_CW} in sets N10, N20 and N40 for the instances solved at optimality. This is not the case for $(PC)$, since we can observe that \texttt{AUTO\_BD} is slightly faster. In general, when \texttt{CS} is included, the solution time and the amount of cuts required decrease. Specifically, for $(MC)$ in N40, the most efficient algorithm is \texttt{BD\_CW+CS} which gets the optimal solution $43.8\%$ faster than \texttt{Auto\_BD+CS}. For $(PC)$, it seems to be also profitable, since for N40 \texttt{BD\_CW+CS} gets the optimal solution using $55\%$ less time than \texttt{Auto\_BD}. These results are shown in the second and fourth block of Table \ref{table:perfN102040}.

\begin{table}[htpb]
\footnotesize
\centering
\begin{tabular}{c c c c cc cc cc cc}
\cline{2-12}% \hline
&&\multirow{2}{*}{Network} & \texttt{CPLEX} &\multicolumn{2}{c}{\texttt{Auto\_BD}} & \multicolumn{2}{c}{\texttt{BD\_Trd}} & \multicolumn{2}{c}{\texttt{BD\_Norm}} & \multicolumn{2}{c}{\texttt{BD\_CW}} \\
\cline{4-12} 
&& & \texttt{t} & \texttt{t} & \texttt{cuts} & \texttt{t} & \texttt{cuts} & \texttt{t} & \texttt{cuts} & \texttt{t} & \texttt{cuts} \\ \hline
\multirow{6}{*}{$(MC)$} &\multirow{3}{0.5cm}{\centering{w.o. \texttt{CS}}} & N10 & 0.18 & 0.43 & 27 & 0.25 & 92 & 0.24 & 91 & 0.19 & 94 \\ 
&& N20 & 6.77 & 4.51 & 273 & 3.89 & 620 & 3.18 & 590 & 3.34 & 641 \\ 
&& N40 & 1646.93 & 617.85 & 1967 & 1095.25 & 3990 & 541.03 & 3677 & 457.81 & 4137 \\ \cline{2-12} 
&\multirow{3}{*}{\texttt{+CS}}& N10 & - & 0.32 & 12 & 0.21 & 49 & 0.28 & 52 & 0.23 & 54 \\ 
&& N20 & - & 3.94 & 178 & 2.29 & 382 & 2.50 & 383 & 1.85 & 416 \\
&& N40 & - & 484.95 & 1248 & 637.49 & 2378 & 575.87 & 2530 & 272.39 & 3186 \\ \hline
\multirow{6}{*}{$(PC)$} &\multirow{3}{0.5cm}{\centering{w.o. \texttt{CS}}} & N10 & 0.18 & 0.29 & 16 & 0.24 & 92 & 0.28 & 89 & 0.20 & 91 \\ %\hline
 && N20 & 6.73 & 4.87 & 305 & 3.55 & 607 & 4.68 & 681 & 2.15 & 606 \\ %\hline
&& N40 & 2153.15 & 504.06 & 1752 & 657.59 & 4470 & 514.42 & 4246 & 837.41 & 4412 \\  \cline{2-12} 
&\multirow{3}{*}{\texttt{+CS}}& N10 & - & 0.28 & 11 & 0.16 & 56 & 0.20 & 57 & 0.145 & 54 \\
&& N20& - & 4.12 & 213 & 3.11 & 497 & 3.43 & 495 & 2.070 & 461 \\
&& N40 & - & 439.23 & 1527 & 261.74 & 3528 & 323.21 & 3583 & 197.55 & 3949 \\ \hline
\end{tabular}
\caption{Comparing the performance of the three algorithms for $(MC)$ and $(PC)$.}
\label{table:perfN102040}
\end{table}

Table \ref{table:optN40} shows the instances in N40 solved in one hour.   Without \texttt{CS}, some instances in set N40 cannot be solved to optimality neither for $(MC)$ nor for $(PC)$. Nevertheless, by including \texttt{CS}, Benders implementations can solve all the instances in N40 in the one hour limit.

\begin{table}[htpb]
\small
\centering
\begin{tabular}{ccccccc}
\cline{2-7}
& & \texttt{CPLEX}& \texttt{Auto\_BD} & \texttt{BD\_Trd} & \texttt{BD\_Norm} & \texttt{BD\_CW}\\ \hline
\multirow{2}{*}{$(MC)$}& without \texttt{CS} &3& 10 & 9 & 8 & 8 \\ 
&\texttt{+CS}& - & 10 & 10 & 10 & 10 \\ \hline
 \multirow{2}{*}{$(PC)$}& without \texttt{CS}&3& 9 & 8 & 8 & 8 \\ %\hline
&\texttt{+CS} & - & 10 & 10 & 10 & 10 \\ \hline
\end{tabular}\caption{Instances N40 solved for $(MC)$ and $(PC)$ within a time limit of 1 hour.}
\label{table:optN40}
\end{table}

We now concentrate on N60 instances. Table \ref{table:n60} compares the performance by  adding cutset inequalities \texttt{CS}, setting an initial feasible solution \texttt{IS} and adding cuts at the root node \texttt{RNC}. We perform this experiment by computing the optimality gap after one hour. Without any of the features mentioned above, the trend on Table \ref{table:perfN102040} is confirmed in $(MC)$ for instances in set N60 where the optimality gap obtained after one hour is smaller in \texttt{AUTO\_BD}, see the first row in Table \ref{table:n60}. However, for $(PC)$ the gap after one hour is slightly better for \texttt{BD\_CW} than for the other methods in this family (see the fifth row in Table \ref{table:n60}). With respect to adding an initial solution, we observe that for $(MC)$ is only profitable for \texttt{BD\_CW+CS}, obtaining in average a $3.5\%$ better optimality gap than without it. The impact of adding an initial solution for $(PC)$ is significant for \texttt{BD\_Trd+CS}, \texttt{BD\_Norm+CS} and \texttt{BD\_CW+CS} obtaining in average solutions with a gap around $4\%$ smaller. However, this improvement is not significant for \texttt{BD\_Auto} for $(PC)$ (see third row of both blocks in Table \ref{table:n60}). Besides, We note that we obtain worse solutions by adding also \texttt{RNC}in both problems with all the algorithms tested. In summary, for the set of instances N60 we have that the best algorithm is \texttt{BD\_CW+CS+IS} for $(MC)$. It decreases the solution gap by around 8\% comparing with the best option of \texttt{Auto\_BD}, which is \texttt{Auto\_BD+CS}. With regard to $(PC)$, the best options are \texttt{BD\_CW+CS+IS} and \texttt{BD\_Norm+CS+IS}, since their solution gaps are around $5.5\%$ smaller than the ones returned by \texttt{Auto\_BD+CS}.

\begin{table}[htpb]
\small
\centering
\begin{tabular}{c c cc cc cc cc}
\cline{3-10}
&& \multicolumn{2}{c}{\texttt{Auto\_BD}} & \multicolumn{2}{c}{\texttt{BD\_Trd}} & \multicolumn{2}{c}{\texttt{BD\_Norm}} & \multicolumn{2}{c}{\texttt{BD\_CW}} \\ \cline{3-10} 
&& \texttt{gap} & \texttt{cuts} & \texttt{gap} & \texttt{cuts} & \texttt{gap} & \texttt{cuts} & \texttt{gap} & \texttt{cuts} \\ \hline
\multirow{4}{*}{$(MC)$}&without\{\texttt{CS}, \texttt{IS}, \texttt{RNC}\}& 38.54 & 6545 & 45.68 & 14068 & 44.53 & 13340 & 43.77 & 16707 \\ 
&\texttt{+CS}& 30.06 & 3729 & 24.27 & 8754 & 22.17 & 8912 & 25.76 & 11378 \\ 
&\texttt{+CS+IS} & 32.90 & 4987 & 27.23 & 9038 & 26.94 & 9469 & 22.27 & 11151 \\
&\texttt{+CS+IS+RNC} &\multicolumn{2}{c}{-}& 37.88 & 8054 & 37.92 & 8230 & 33.58 & 10834 \\ \hline
\multirow{4}{*}{$(PC)$}&without\{\texttt{CS}, \texttt{IS}, \texttt{RNC}\} & 20.49 & 7009 & 20.40 & 14784 & 21.41 & 15501 & 19.93 & 15116 \\ %\cline{1-3}
&\texttt{+CS} & 15.92 & 5109 & 14.89 & 12354 & 14.09 & 11687 & 14.50 & 11744 \\ %\cline{2-3}
&\texttt{+CS+IS} & 15.86 & 4372& 11.06 & 8961 & 10.47 & 8490 & 10.44 & 9683 \\ %\cline{3-3}
&\texttt{+CS+IS+RNC} &\multicolumn{2}{c}{-} & 20.93 & 10971 & 21.28 & 11449 & 19.94 & 11053 \\ \hline
\end{tabular}\caption{Computing gaps to solve N60 for $(MC)$ and $(PC)$ instances comparing the performance of three families of Benders cuts.}
\label{table:n60}
\end{table}

In the following, we analyze the performance of algorithms \texttt{BD\_Norm+CS} \texttt{BD\_CW+CS} when changing parameters $C_{max}$, $\beta$ and $u$ in the corresponding models. In Tables \ref{table:SA MC} and \ref{table:SA PC}, we report average solution times and number of cuts needed to obtain optimal solutions for N40 for different values of these parameters. The instances are grouped by the three different increasing values of the available budget $C_{max}$ (Table \ref{table:SA MC}.a) or $\beta$ (Table \ref{table:SA PC}.a) and private utility $u$ (Tables \ref{table:SA MC}.b and \ref{table:SA PC}.b). For $(MC)$, it is observed that the bigger the value of $C_{max}$ is, the shorter the average solution time is. Table \ref{table:SA MC}.b. shows that the larger the parameter $u$ is, the shorter the solution time for \texttt{BD\_Norm+CS} is. This behavior seems to be different if we are using \texttt{BD\_CW+CS}.

\begin{table}[htpb]
\small
\centering
\begin{minipage}{0.48\textwidth}
\begin{tabular}{c cc cc}
\hline
\multirow{2}{*}{$C_{max}$}& \multicolumn{2}{c}{\texttt{BD\_Norm+CS}} & \multicolumn{2}{c}{\texttt{BD\_CW+CS}} \\ \cline{2-5} 
 & \texttt{t} & \texttt{cuts} & \texttt{t} & \texttt{cuts} \\ \hline
$0.3\,TC$ & 1053.56 & 1580 & 873.58 & 2017 \\ 
$0.5\,TC$ & 622.45 & 2634 & 375.30 & 3358 \\ 
$0.7\,TC$ & 151.24 & 3970 & 177.90 & 5035 \\ \hline
\end{tabular}

\begin{center}a. \end{center}
\end{minipage}
\begin{minipage}{0.48\textwidth}
\centering
\begin{tabular}{c cc cc}
\hline
 \multirow{2}{*}{$u$} & \multicolumn{2}{c}{\texttt{BD\_Norm+CS}} & \multicolumn{2}{c}{\texttt{BD\_CW+CS}} \\ \cline{2-5} 
 & \texttt{t} & \texttt{cuts} & \texttt{t} & \texttt{cuts} \\ \hline
$1.5\,SPath$ & 802.05 & 2792 & 495.84 & 3041 \\ 
$2\,SPath$ & 622.46 & 2634 & 375.30 & 3358 \\ 
$3\,SPath$ & 591.02 & 2674 & 490.28 & 3173 \\ \hline
\end{tabular}

\begin{center}b. \end{center}
\end{minipage}
\caption{Sensitivity analysis for $(MC)$ with $\vert N \vert = 40.$}
\label{table:SA MC}
\end{table}

For $(PC)$, Table \ref{table:SA PC}.a shows that both algorithms take less time to solve the problem to optimality for $\beta = 0.7$ than for $\beta=0.3$ and $\beta=0.5$. \texttt{BD\_CW+CS} is 5 minutes faster in average than \texttt{BD\_Norm+CS} with $\beta = 0.5$. For $\beta = 0.3$ the result is the opposite, \texttt{BD\_Norm+CS} is 100 seconds faster in average than \texttt{BD\_CW+CS}. By varying $u$, we observe that the less the difference between public and private mode distances in the underlying network is, the longer it takes to reach optimality.

\begin{table}[htpb]
\small
\centering
\begin{minipage}{0.48\textwidth}
\centering
\begin{tabular}{c cc cc}
\hline

\multirow{2}{*}{$\beta$}& \multicolumn{2}{c}{\texttt{BD\_Norm+CS}} & \multicolumn{2}{c}{\texttt{BD\_CW+CS}} \\ \cline{2-5} 
 & \texttt{t} & \texttt{cuts} & \texttt{t} & \texttt{cuts} \\ \hline
0.3 & 640.28 & 2675 & 744.95 & 2848 \\ 
0.5 & 697.87 &3673 & 387.40 & 3914 \\ 
0.7 & 273.53 & 3873 & 242.04 & 4460 \\ \hline

\end{tabular}
\begin{center}a. \end{center}
\end{minipage}
\begin{minipage}{0.48\textwidth}
\centering
\begin{tabular}{c cc c c}
\hline
 \multirow{2}{*}{$u$}& \multicolumn{2}{c}{\texttt{BD\_Norm+CS}} & \multicolumn{2}{c}{\texttt{BD\_CW+CS}} \\ \cline{2-5} 
 & \texttt{t} & \texttt{cuts} & \texttt{t} & \texttt{cuts} \\ \hline
 $1.5\,SPath$ & 653.47 & 3625 & 620.79 & 3613 \\
 $2\,SPath$ & 697.87 &3673 & 387.40 & 3914 \\ 
 $3\,SPath$ & 561.43 &3521 & 378.11 & 3643 \\ \hline
\end{tabular}
\begin{center}b. \end{center}
\end{minipage}
\caption{Sensitivity analysis for $(PC)$ with $\vert N \vert = 40.$}
\label{table:SA PC}
\end{table}

\subsection{Performance of algorithms on benchmark instances}

We start by analyzing the Sevilla instance. Tables \ref{table: Sevilla MC} and \ref{table: Sevilla PC} show some results for this instance solved with \texttt{BD\_CW+CS}. Based on this case, figures in Tables \ref{table: Sevilla MC} and \ref{table: Sevilla PC} show the solution graphs for different parameter values. Points not connected in these graphs refer to those nodes that have not been built. The O/D pairs involving some of these nodes are thus not covered. They have been drawn to represent these not covered areas. Data corresponding to each case is collected at the bottom of its figure, in which \texttt{v(ILP)} refers to the objective value. For model $(MC)$, parameter \texttt{cost} represents the cost of the network built, and, for $(PC)$, $G_{cov}$ makes reference to the demand covered. For $(MC)$, we observe that smaller values of $C_{max}$ carry larger solution times as in \textit{random instances}. For $(PC)$, as opposite to {\it random instances}, higher values of $\beta$ are translated in larger solution times. Besides, in this instance, for both models, the shorter the parameter $u$ is, the larger the solution times are.\\
Furthermore, we compare the performance of the \texttt{GRASP} algorithm from \cite{garcia2013grasp} and our implementation \texttt{BD\_CW+CS}. The goal of this experiment is to compare our implementation with a state-of-the-art heuristic for network design problems. We implemented the \texttt{GRASP} algorithm to run $5$ times and return the best solution. Table \ref{table: grasp MC} shows solution times, best value for \texttt{GRASP} (\texttt{Best Value}), the optimality gap, and the optimal value computed with \texttt{BD\_CW+CS}. On the one hand, we observed that the more time \texttt{BD\_CW+CS} takes to compute the optimal solution, the larger the gap of the solution returned by \texttt{GRASP} is. This happens for smaller values of the budget $C_{max}$ and utility $u$. On the other hand, for problems where \texttt{GRASP} obtains small optimality gap, \texttt{BD\_CW+CS} is more efficient to compute the optimal solution. In other words, since \texttt{GRASP} is a constructive algorithm, it is not competitive for instances whose optimal solution captures most of the demand.

\begin{table}[htpb]
\small
\centering
\begin{tabular}{ccccccc}
\hline
 \multirow{2}{*}{$C_{max}$} & \multirow{2}{*}{$u$} & \multicolumn{3}{c}{\texttt{GRASP}} & \multicolumn{2}{c}{\texttt{BD\_CW+CS}} \\ \cline{3-5} \cline{6-7}
 & & \texttt{t} & \texttt{Best Value} & \texttt{gap} & \texttt{t} & \texttt{v(ILP)} \\ \hline
 $0.2\,TC$ & \multirow{3}{*}{$2\,SPath$} & 110.829 & 48629 & 6.97 &1036.11 & 52274 \\ 
 $0.3\,TC$ & & 260.220 & 59828 & 3.96 & 313.07 & 62294 \\
 $0.4\,TC$ & & 396.226 & 63546 & 0.72 & 21.36 & 64011 \\ \hline
\multirow{2}{*}{$0.3\,TC$} & $1.5\,SPath$ & 267.275 & 55778 & 6.97 & 2243.83 & 59958 \\
 & $3\,SPath$ & 225.312 & 62049 & 0.99 & 113.88 & 62670 \\ \hline
\end{tabular}
\caption{Sensitivity analysis for \texttt{GRASP} algorithm \cite{garcia2013grasp} for the Sevilla instance.}
\label{table: grasp MC}
\end{table}

We discuss the results for the Sioux Falls instance, summarized in Tables \ref{table: sioux MC} and \ref{table: sioux PC} in \ref{s:sioux_results}. We observe for $(MC)$, as in the Sevilla network, that the smaller the values of $C_{max}$ and $u$ are, the larger the solution time is. The same is true when varying $\beta$ in $(PC)$, but not for $u$. It takes less time if the difference between both modes of transport is smaller or larger than $2\,SPath$. \\
Our exact method is able to obtain the best quality solution, with a certificate of optimality in reasonable times. Given that network design problems are strategic decisions, having the best quality decision is often more important than the computational times. However, having efficient exact methods as the ones proposed in this article, allows decision makers to perform sensitivity analysis with optimality guarantees in reasonable times.

We also tested our algorithms on benchmark instances \texttt{Germany50} and \texttt{Ta2} form SNDLib (\url{http://sndlib.zib.de/}). We observed that adding cuts at the root node is beneficial for Germany50. We think that this behavior is due to the fact that Germany50 has a denser potential graph (in particular, Germany50 is not a planar graph). The rest of the results obtained for these instances are aligned with the results obtained for Sevilla and Sioux Falls instances. 
For the sake of shortness, this analysis is included in the supplementary material in \url{http://github.com/vbucarey/network_design_coverage/}.

\newcommand{\factor}{0.3}

\begin{table}[htpb]
\small
\centering
\begin{tabular}{|C{0.45\textwidth}|C{0.45\textwidth}|}
\hline
 Underlying Network & $C_{max} = 0.3\,TC$, $u=2\,SPath$ \\
 \includegraphics[width = \factor\textwidth]{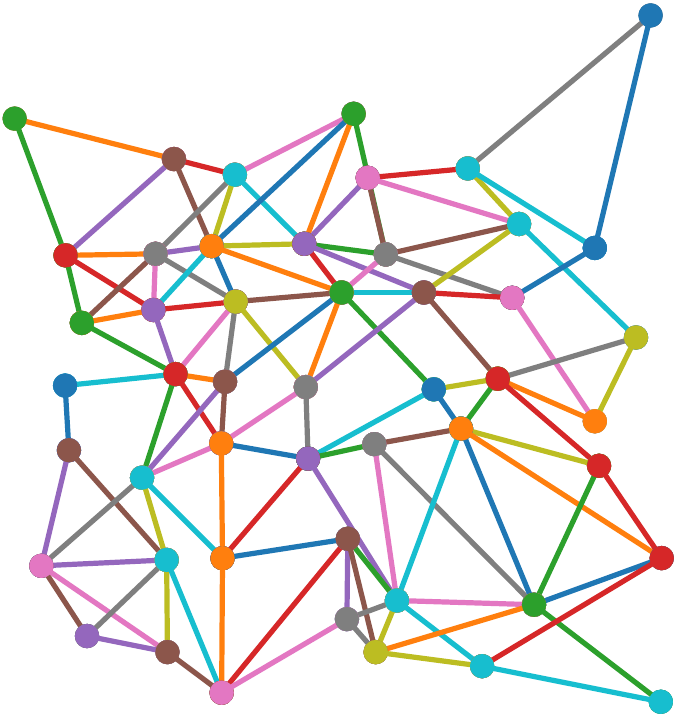} & \includegraphics[width = \factor\textwidth]{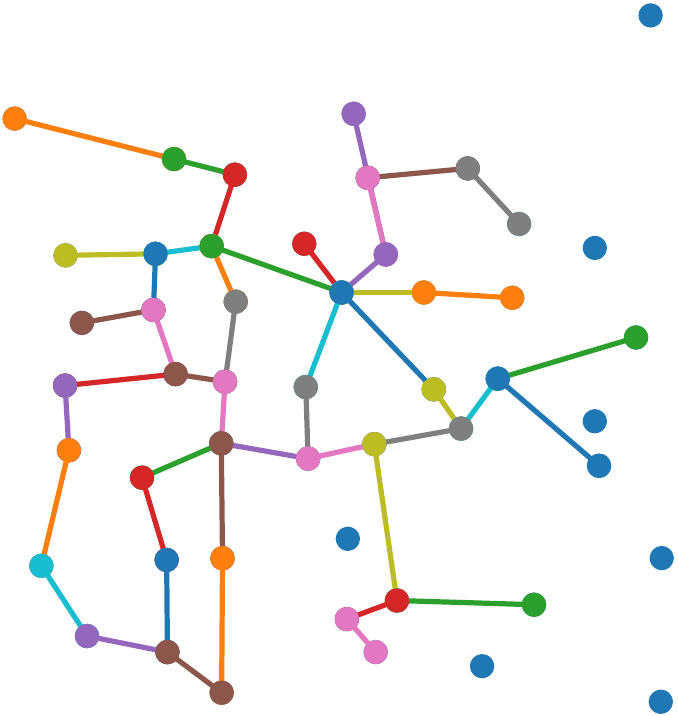} \\
$TC =295297826$, $G =64011$ & \texttt{t} $ =313.07$ , \texttt{cuts} $=7149$ \\ & \texttt{cost} $=88939.76$, \texttt{v(ILP)} $=62294$ \\ \hline 
 $C_{max} = 0.2\,TC$, $u=2\,SPath$ & $C_{max} = 0.4\,TC$, $u=2\,SPath$ \\ 
 \includegraphics[width = \factor\textwidth]{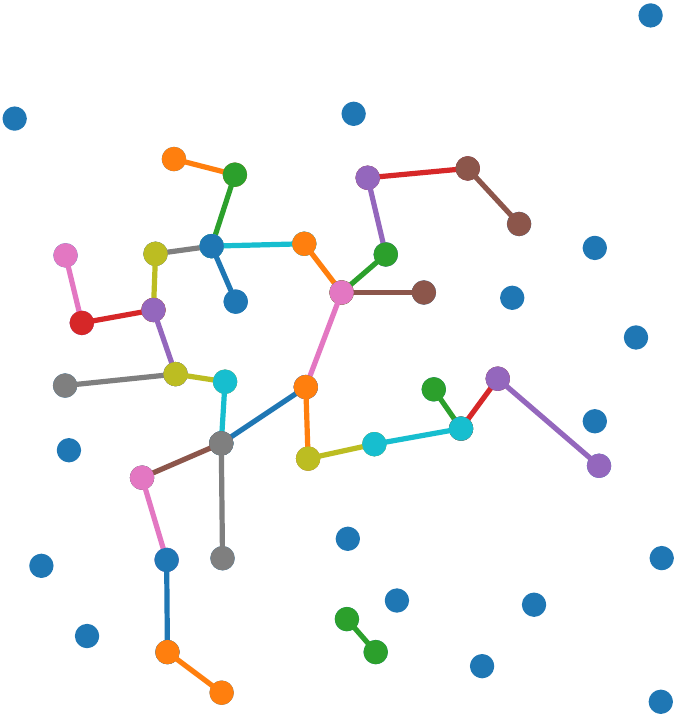} & \includegraphics[width = \factor\textwidth]{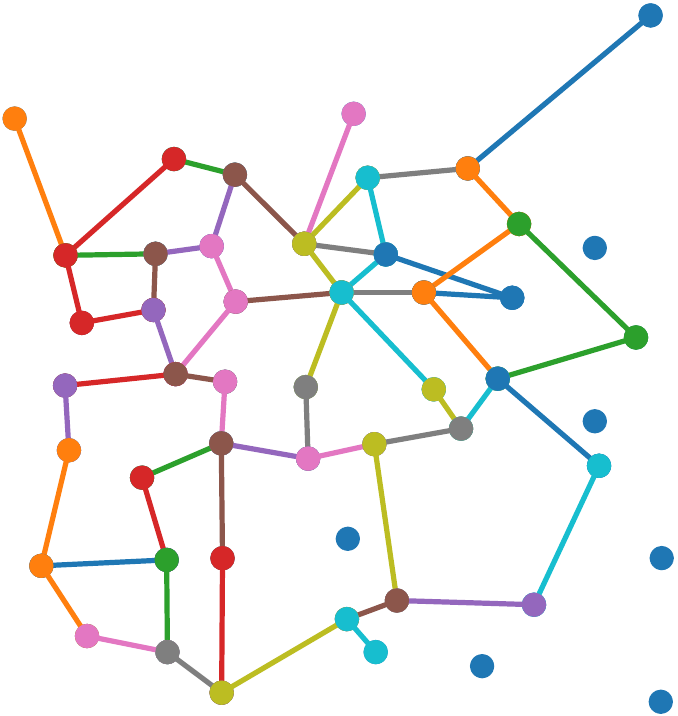} \\ 
 \texttt{t} $ = 1036.11$, \texttt{cuts} $=8428$ & \texttt{t} $ = 21.36$, \texttt{cuts} $=2259$ \\ \texttt{cost} $=53362.53$, v(ILP) $=52274$ & \texttt{cost} $=124353.84$, \texttt{v(ILP)} $=64011$ \\ \hline
 $C_{max} = 0.3\,TC$, $u=1.5\,SPath$ & $C_{max} = 0.3\,TC$, $u=3\,SPath$ \\ 
 \includegraphics[width = \factor\textwidth]{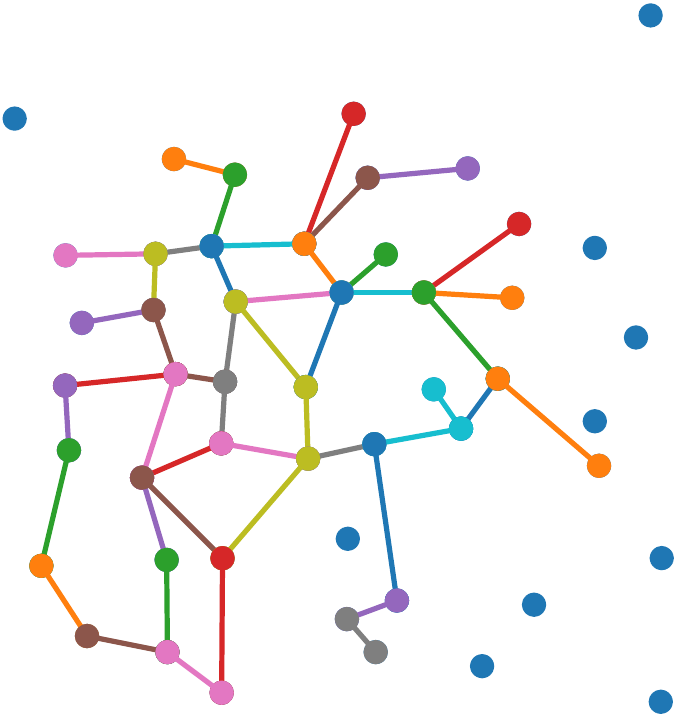} & \includegraphics[width = \factor\textwidth]{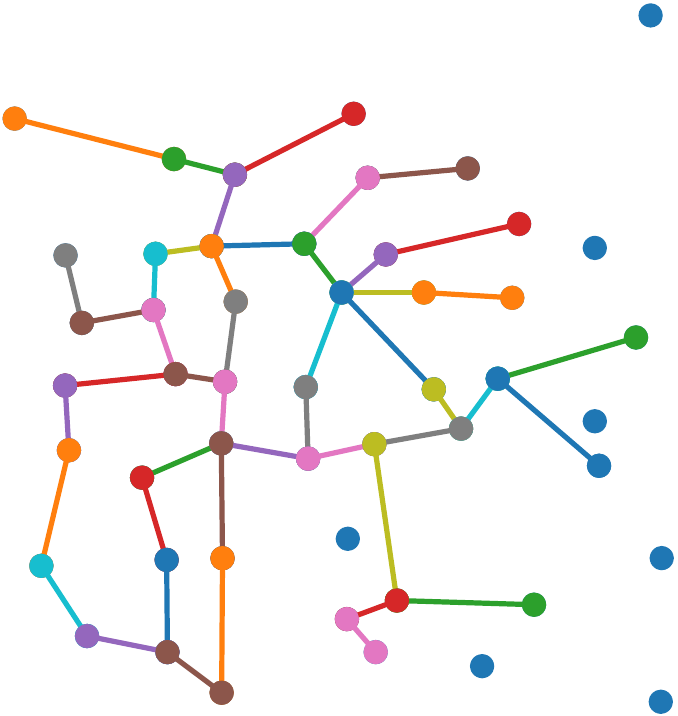} \\ 
 \texttt{t}$ = 2243.83$, \texttt{cuts} $=8537$ & \texttt{t} $ = 113.88$, \texttt{cuts} $=6400$ \\ \texttt{cost} $=88800.91$, \texttt{v(ILP)} $=59958$ & \texttt{cost} $=88879.76$, \texttt{v(ILP)} $=62670$ \\ \hline
\end{tabular}\caption{Sensitivity analysis for the Sevilla Network with $(MC)$.}
\label{table: Sevilla MC}
\end{table}

\begin{table}[htpb]
\small
\centering
\begin{tabular}{|C{0.45\textwidth}|C{0.45\textwidth}|}
\hline
Underlying Network & $\beta = 0.5$, $u=2\,SPath$ \\ 
 \includegraphics[width = \factor\textwidth]{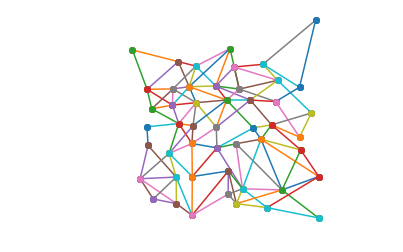} & \includegraphics[width = \factor\textwidth]{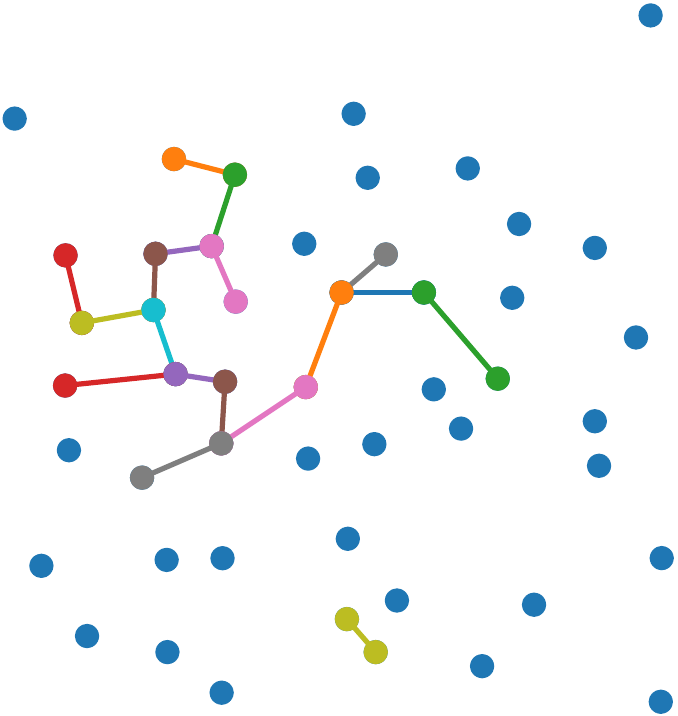} \\ 
 $TC =295297826$, $G =64011$ & \texttt{t} $ = 463.45$, \texttt{cuts} $= 3934$ \\ & $G_{cov} = 32070$, \texttt{v(ILP)} $= 28905.71$ \\ \hline
 $\beta = 0.3$, $u=2\,SPath$ & $\beta = 0.7$, $u=2\,SPath$ \\
 \includegraphics[width = \factor\textwidth]{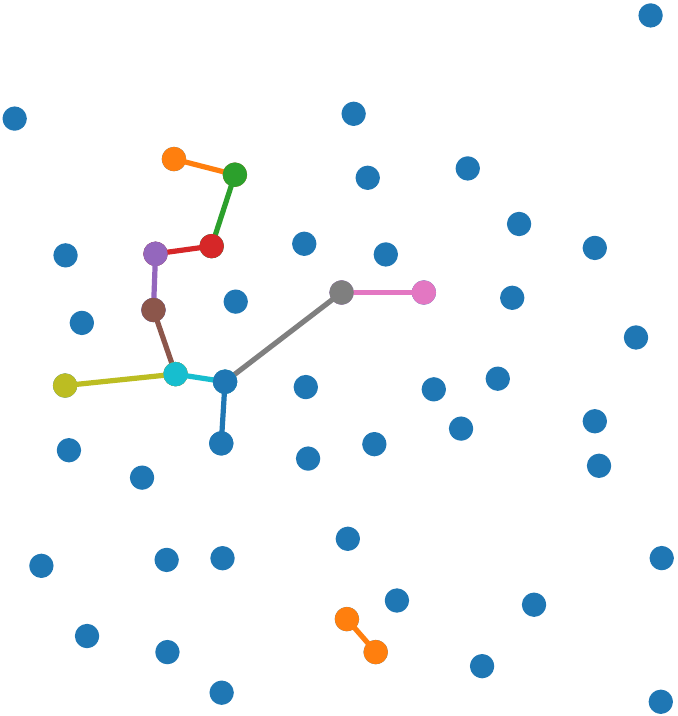} & \includegraphics[width = \factor\textwidth]{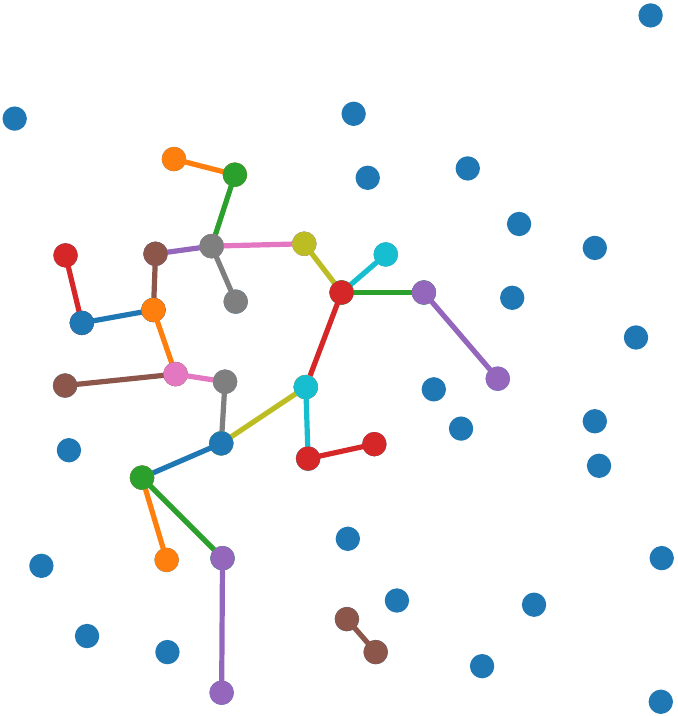} \\ 
 \texttt{t} $ = 353.43$, \texttt{cuts} $= 2294$ & \texttt{t} $ = 532.17$, \texttt{cuts} $= 6070$ \\ $G_{cov} = 19204$, \texttt{v(ILP)} $= 17687.02$ & $G_{cov} = 44830$, \texttt{v(ILP)} $= 42521.65$ \\ \hline
 $\beta = 0.5$, $u=1.5\,SPath$ & $\beta = 0.5$, $u=3\,SPath$ \\
 \includegraphics[width = \factor\textwidth]{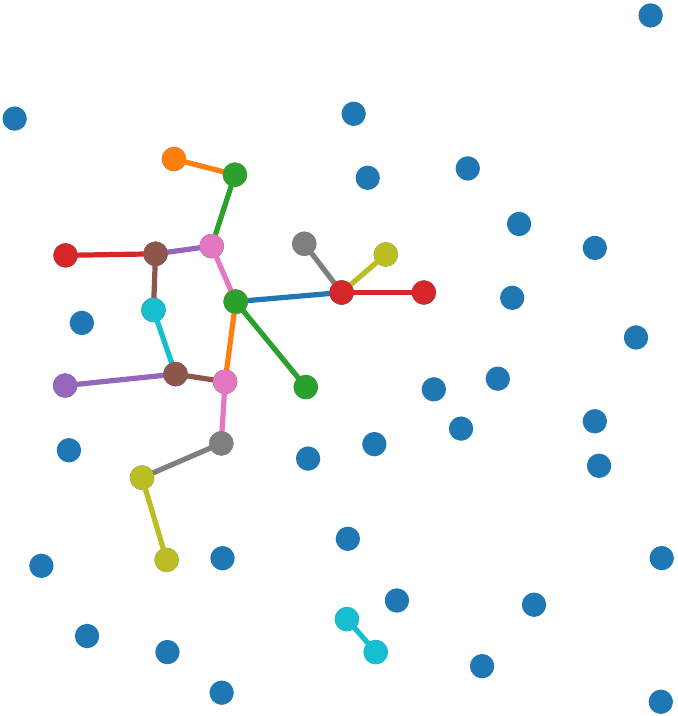} & \includegraphics[width = \factor\textwidth]{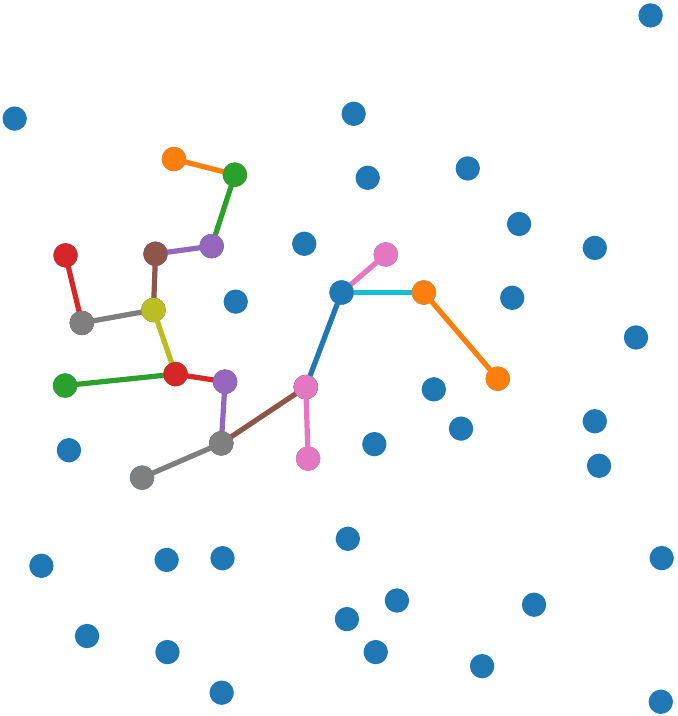} \\ 
 \texttt{t} $ = 1358.20$, \texttt{cuts} $= 4663$ & \texttt{t} $ = 396.56$, \texttt{cuts} $= 3337$ \\ $G_{cov} = 32105$, \texttt{v(ILP)} $= 30562.25$ & $G_{cov} = 32024$, \texttt{v(ILP)} $= 28190.34$ \\ \hline
\end{tabular}\caption{Sensitivity analysis for the Sevilla Network with $(PC)$.}
\label{table: Sevilla PC}
\end{table}

\section{Conclusions} \label{s:conclusion} 

In this paper, we have studied two variants of the \textit{Network Design Problem}: \textit{Maximal Covering Network Design Problem} where we maximize the demand covered under a budget constraint; and \textit{Partial Set Covering Network Design Problem} where the total building cost is minimized subject to a lower bound on the demand covered. We propose mixed integer linear programming formulations that are stronger than existing ones for both problems. We provide some polyhedral properties of these formulations, useful from the algorithmic point of view. We develop exact methods based on Benders decomposition. We also discuss some pre-processing procedures to scale-up the instances solved. These pre-processing techniques play a key role in order to obtain information about the instances and to derive a better algorithmic performance. Our computational results show that the techniques developed in this article allow obtaining better solutions in less time than the techniques in the existing literature. Further research on this topic will focus on the synergy of sophisticated heuristics to find good feasible solutions and decomposition methods, such as the ones presented in this article, to get better bounds and close the optimality gap.
Finally, we remark that objectives of $(MC)$ and $(PC)$ can be included in a bicriteria optimization model. An interesting extension is to exploit the decomposition methods described in this manuscript to the multiobjective setting.

\section*{Acknowledgments}

V\'ictor Bucarey and Martine Labb\'e have been partially supported by the Fonds de la Recherche Scientifique - FNRS under Grant(s) no PDR T0098.18. Natividad González-Blanco and Juan A. Mesa are partially supported by Ministerio de Economía y Competitividad (Spain)/FEDER(UE) under grant MTM2015-67706-P and Ministerio de Ciencia y Tecnolog\'ia(Spain)/FEDER(UE) under grant PID2019-106205GB-I00.

\bibliography{biblio}

\newpage
\begin{appendix}

\section{Pseudo-code for initial feasible solutions} \label{ap:mipstart}

In this section we provide the pseudo-codes to determine an initial feasible solution for $(MC)$ and $(PC)$ described in Section \ref{s:initial_solution}. We denote by $N_s, E_s$ and $W_s$ the set of indices of design and mode choice variables set to 1 at the end of each algorithm.

\begin{algorithm}[H] 
	\caption{Initial Feasible Solution for $(MC)$} 
	\label{init_sol_MC} 
	\begin{algorithmic} 
	\STATE {\bf Initialization:} Set $N_s = \emptyset$, $E_s = \emptyset$ and $ W_s = \emptyset$ and $IC$ = 0. $\quad$
	\STATE Compute ratio $r_w = \frac{g^w}{C(\mbox{Path}_w)}$:
	
	\FOR{$w \in W$ in decreasing order of $r_w$}
		\STATE $\bar C = C(\mbox{Path}_w) - \sum_{e \in E_s \cap\widetilde{E}^w}c_e - \sum_{i \in N_s \cap\widetilde{N}^w} b_i$
		\IF{$IC + \bar C \le C_{max}$}
		 \STATE $W_s \leftarrow W_s \cup \{w\}$
		 \STATE $E_s \leftarrow E_s \cup \widetilde{E}^w$
		 \STATE $N_s \leftarrow N_s \cup \widetilde{N}^w$
		 \STATE $IC \leftarrow IC + \bar C$
		\ENDIF
	\ENDFOR
	\STATE $x_e = 1$ for $e \in E_s$, 0 otherwise.
	\STATE $y_i = 1$ for $i \in N_s$, 0 otherwise.
	\STATE $z^w = 1$ for $w \in W_s$, 0 otherwise.
	\RETURN $(x,y,z)$
	\end{algorithmic}

\end{algorithm}

\begin{algorithm}[H] 
	\caption{Initial Feasible Solution for $(PC)$} 
	\label{init_sol_PC} 
	\begin{algorithmic} 
	\STATE {\bf Initialization:} Set $\bar W_s = W$ and $Z_s = Z_{total}$. $\quad$
	\STATE Compute ratio $r_w = \frac{g^w}{C(\mbox{Path}_w)}$:
	
	\FOR{$w \in W$ in decreasing order of $r_w$}
		\IF{$Z_s - g^w \ge \beta\, Z_{total}$}
		 \STATE $W_s \leftarrow W_s \setminus \{w\}$
		 \STATE $Z_s \leftarrow Z_s - g^w$
		\ENDIF
	\ENDFOR
	\STATE $x_e = 1$ if $e \in \bigcup_{w \in W_s} \widetilde{E}^w$, 0 otherwise.
	\STATE $y_i = 1$ if $i \in \bigcup_{w \in W_s} \widetilde{N}^w$, 0 otherwise.
	\STATE $z^w = 1$ for $w \in W_s$, 0 otherwise.
	\RETURN $(x,y,z)$
	\end{algorithmic}
\end{algorithm}

\section{Results for SIOUX Falls networks} \label{s:sioux_results}
\renewcommand{\factor}{0.23}
\begin{table}[H]
\small
\centering
\begin{tabular}{|C{0.45\textwidth}|C{0.45\textwidth}|}
\hline
 Underlying Network & $C_{max} = 0.5\,TC$, $u=2\,SPath$ \\ \includegraphics[width = \factor\textwidth]{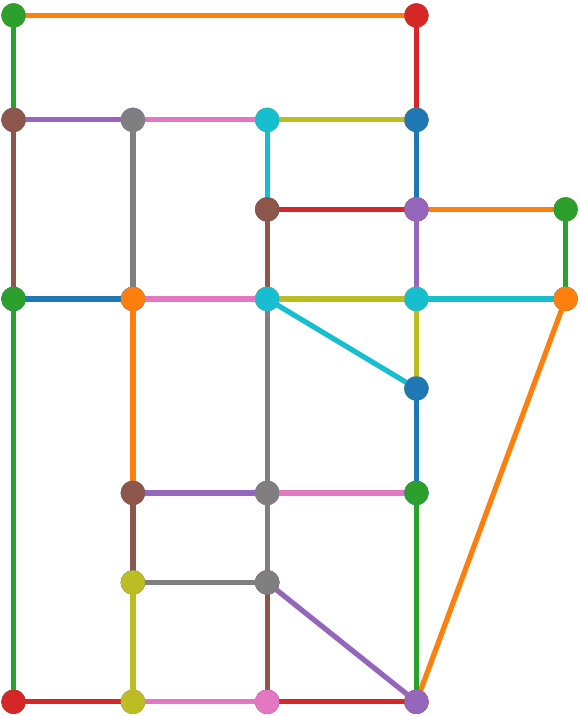} & \includegraphics[width = \factor\textwidth]{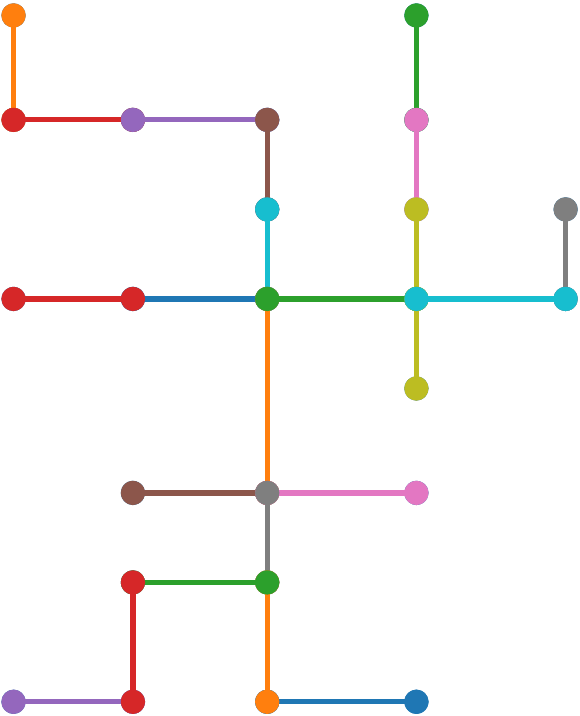} \\ %\hline
 $TC = 4171$, $G = 84437$ & \texttt{t} $ = 22.85$, \texttt{cuts} $ = 3496$ \\ & \texttt{cost} $ = 2070$, \texttt{v(ILP)} $ = 75488$ \\ \hline
 $C_{max} = 0.3\,TC$, $u=2\,SPath$ & $C_{max} = 0.7\,TC$, $u=2\,SPath$ \\ 
 \includegraphics[width = \factor\textwidth]{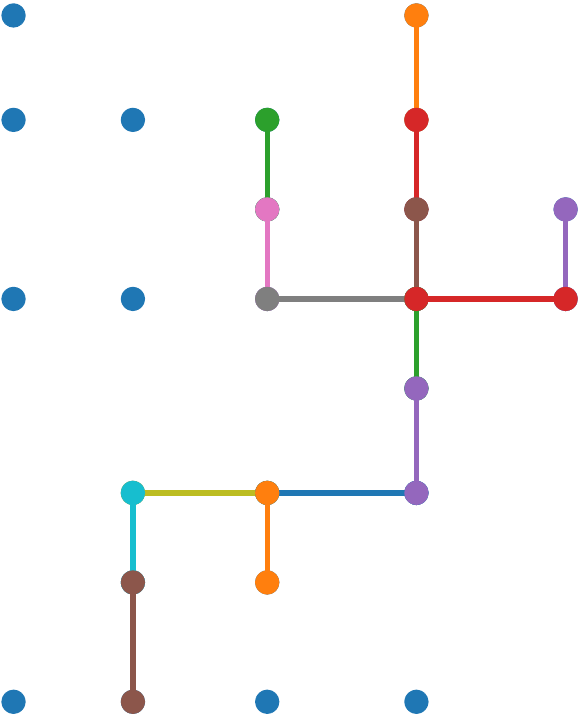} & \includegraphics[width = \factor\textwidth]{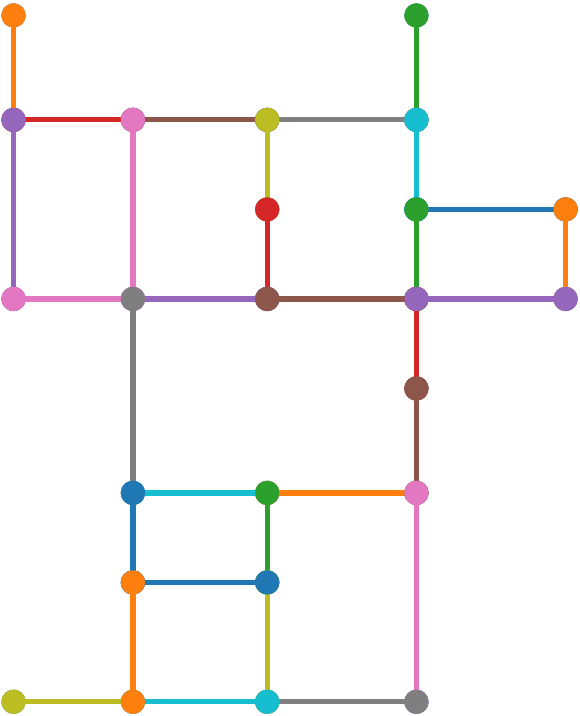} \\ %\hline
 \texttt{t}$ = 458.84$, \texttt{cuts} $ = 3056$ & \texttt{t} $ = 2.73$, \texttt{cuts} $ = 801$ \\ \texttt{cost} $ = 1237$, \texttt{v(ILP)} $ = 35039$ & \texttt{cost} $ = 2870$, \texttt{v(ILP)} $ = 82699$ \\ \hline
 $C_{max} = 0.5\,TC$, $u=1.5\, SPath$ & $C_{max} = 0.5\,TC$, $u=3\,SPath$ \\ 
 \includegraphics[width = \factor\textwidth]{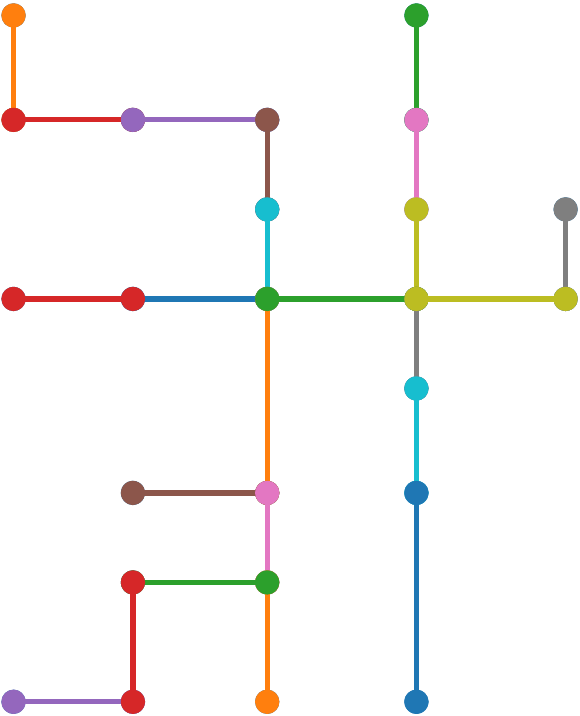} & \includegraphics[width = \factor\textwidth]{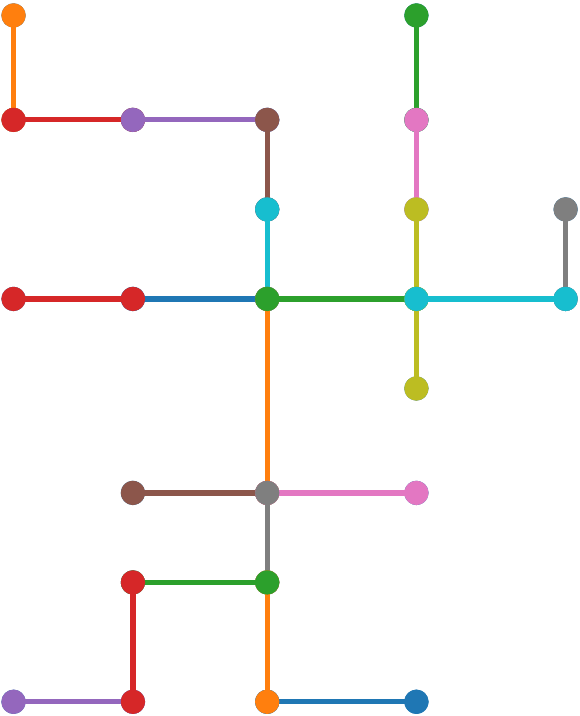} \\ %\hline
 \texttt{t}$ = 60.31$, \texttt{cuts} $ = 3460$ & \texttt{t} $ = 14.17$, \texttt{cuts} $ = 2641$ \\ \texttt{cost} $ = 2080$, \texttt{v(ILP)} $ = 68227$ & \texttt{cost} $ = 2070$, \texttt{v(ILP)} $ = 75488$ \\ \hline
\end{tabular}\caption{Sensitivity analysis for the Sioux Falls Network with $(MC)$.}
\label{table: sioux MC}
\end{table}

\begin{table}[htpb]
\small
\centering
\begin{tabular}{|C{0.45\textwidth}|C{0.45\textwidth}|}
\hline
 Underlying Network & $\beta = 0.5$, $u=2\,SPath$ \\ 
 \includegraphics[width = \factor\textwidth]{figs/potentialNetworkSIOUX} & \includegraphics[width = \factor\textwidth]{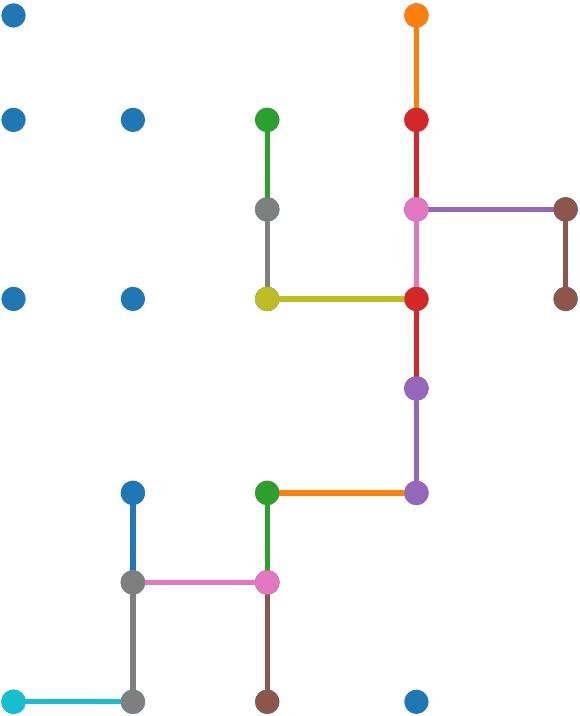} \\
 $TC = 4171$, $G = 84437$ & \texttt{t} $ = 429.85$, \texttt{cuts} $ = 3306$ \\ & $G_{cov} = 44112$, \texttt{v(ILP)} $ = 1411$ \\ \hline
 $\beta = 0.3$, $u=2\,SPath$ & $\beta = 0.7$, $u=2\,SPath$ \\ 
 \includegraphics[width = \factor\textwidth]{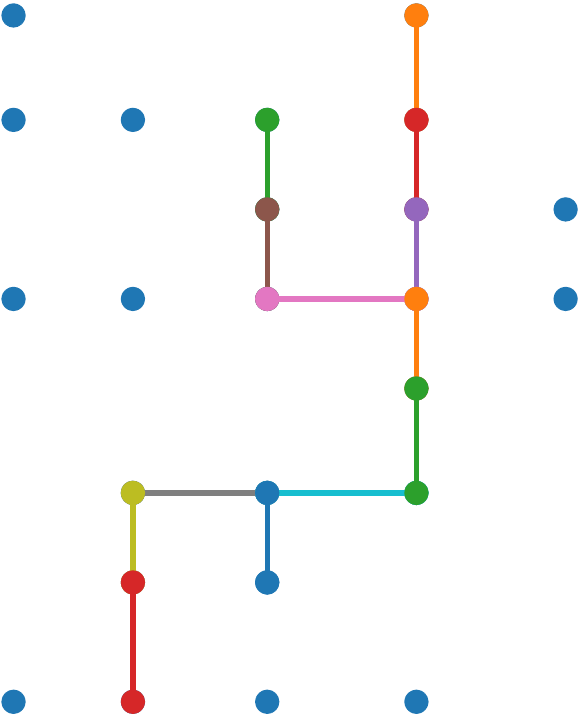} & \includegraphics[width = \factor\textwidth]{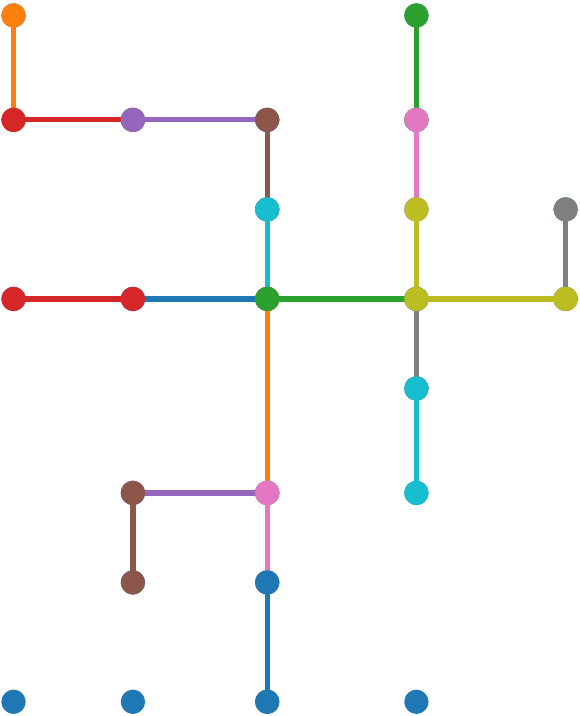} \\
 \texttt{t} $ = 925.68$, \texttt{cuts} $ = 2783$ & \texttt{t} $ = 136.06$, \texttt{cuts} $ = 3674$ \\ $G_{cov} = 24588$, \texttt{v(ILP)} $ = 1058$ & $G_{cov} = 60276$, \texttt{v(ILP)} $ = 1726$ \\ \hline
 $\beta = 0.5$, $u=1.5\,SPath$ & $\beta = 0.5$, $u=3\,SPath$ \\
 \includegraphics[width = \factor\textwidth]{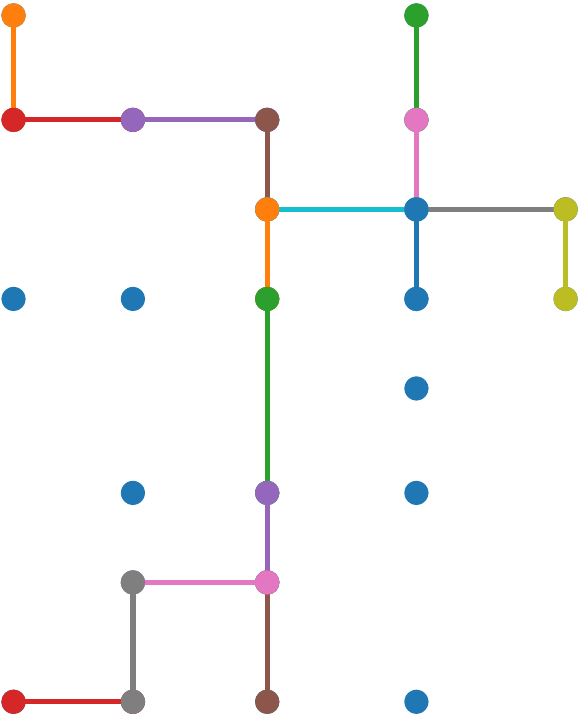} & \includegraphics[width = \factor\textwidth]{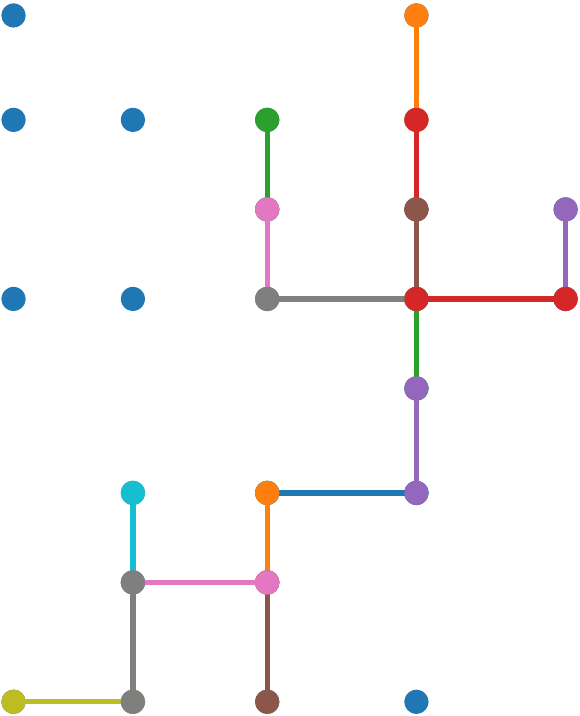} \\
 \texttt{t} $ = 1471.84$, \texttt{cuts} $ = 3793$ & \texttt{t} $ = 1149.26$, \texttt{cuts} $ = 3128$ \\ $G_{cov} = 43599$, \texttt{v(ILP)} $ = 1491$ & $G_{cov} = 42331$, \texttt{v(ILP)} $ = 1411$ \\ \hline
\end{tabular}\caption{Sensitivity analysis for the Sioux Falls Network with $(PC)$.}
\label{table: sioux PC}
\end{table}
\end{appendix}

\end{document}